\documentclass{article}
\usepackage{amsmath}
\usepackage{amssymb}
\usepackage{amsthm}
\usepackage{array}
\usepackage[dvipsnames]{xcolor}
\usepackage{graphicx}
\usepackage{tikz}
\usepackage{tikz-cd}
\usetikzlibrary{matrix, calc, arrows}
\usepackage{xcolor}
\definecolor{mygray}{gray}{0.7}

\newtheorem{theorem}{Theorem}[section]
\newtheorem{lemma}[theorem]{Lemma}
\newtheorem{corollary}[theorem]{Corollary}

\setlength{\topmargin}{0in} \setlength{\headsep}{.5in}
\setlength{\textheight}{8in} \setlength{\textwidth}{6.5in}
\setlength{\oddsidemargin}{.15in} \setlength{\evensidemargin}{.15in}

\title{Zeta functions of projective hypersurfaces with ordinary double points.}

\begin{document}

\author{Vladimir Baranovsky and Scott Stetson}

\maketitle

\abstract{
	We extend the approach Abbott, Kedlaya and Roe to computation of
	  the zeta function of a projective hypersurface 
	with $\tau$ isolated ordinary double points  over
   a finite field $\mathbb{F}_q$ 
	given by the reduction of a homogeneous polynomial $f \in \mathbb{Z}[x_0, \ldots, x_n]$, 
    under the assumption  of  equisingularity over $\mathbb{Z}_q$. The algorithm 
    is based on the results of Dimca and Saito (over the field $\mathbb{C}$ of complex 
    numbers) on the pole order spectral sequence in the 
case of ordinary double points. We give some examples of explicit computations for 
surfaces in $\mathbb{P}^3$.}

\section{Introduction}

Let $X_k = Z(f) \subset Y = \mathbb{P}^n_k$ be a projective hypersurface of degree 
$N$ over
a finite field $k = \mathbb{F}_q$ (for $q = p^a$ a prime power for $p > n-1$) 
with $\tau$ ordinary double points and assume that the 
defining equation $f$ admits an equisingular lift (see Section 2 for definitions) $f_{\mathcal{O}} 
\in \mathbb{Z}_q [x_0, \ldots, x_n]$ where $\mathcal{O} := \mathbb{Z}_q
\subset K := \mathbb{Q}_q$. For computational convenience we will actually work with 
polynomials over $\mathbb{Z}$ or its finite extension, although much of the theory 
holds over $\mathcal{O}$. 

The purpose of this paper is to generalize the algorithm originally due to 
Kedlaya that computes 
the zeta function of a smooth projective hypersurface, to the case of ordinary double 
points. This generalization is based on the results of Dimca and Saito on the spectral
sequence of the de Rham complex with deformed differential $d + df \wedge(\cdot)$, see
\cite{DiSaM1}, \cite{DiSaM2}, \cite{SaM}. 

We follow the exposition in \cite{Ge} with adjustments needed for our (mildly)
singular case. We switch to the study of the
open complement $U_k = \mathbb{P}^n_k \setminus X_k$ and show that the
equisingular deformation assumption allows to view $U_{\mathcal{O}} = \mathbb{P}^n_{\mathcal{O}}
\setminus X_{\mathcal{O}}$ as an open complement $Y \setminus Z$ where 
$Y$ is a smooth projective scheme over $Spec(\mathcal{O})$ and $Z$
is a normal crossings divisior with irreducible components smooth over $Spec(\mathcal{O})$.
In fact, in our case $Y$ is simply the blowup of $\mathbb{P}^n_{\mathcal{O}}$ at the 
singular points of $X_{\mathcal{O}}$. 

The presentation $U_\mathcal{O} = Y \setminus Z$ allows us to use a result
of Baldasarri-Chiarelotto, cf. \cite{BaCh}, that identifies the rigid cohomology of $U_k$, as 
a vector space over $K$, with the de Rham cohomology of $U_K$. Notice that
apriori the de Rham cohomology space does not carry a natural action induced
by the Frobenius map and we need to use the  isomorphism with rigid 
cohomology to transfer it from the latter. 

Fortunately, $U_k$, resp. $U_K$ are affine over $Spec(k)$, resp. $Spec(K)$ and 
both de Rham and rigid cohomology are computed via fairly explicit complexes
involving the ring of functions on $U_k$ and its dagger completion. In other words,
we use the Monsky-Weischnitzer model of rigid cohomology which is available
for the smooth affine complement. The above
isomorphism on cohomology is induced by the embedding of the usual functions
into completed functions.  Hence the Frobenius-induced operator on de Rham cohomology 
involves applying (a lift of) the Frobenius map to a usual differential form, obtaining 
a form with coefficients in the completion and then adjusting it by an exact form to 
get an equivalent element in the uncompleted de Rham complex. This procedure is 
covered in Section 5.

As in \cite{Ge}, computation involves the pole order filtration on differential forms
over $U_K$, except that for hypersurfaces with ordinary double points the
corresponding spectral sequence degenerates not at the $E_0$ page (as it happens 
for smooth hypersurfaces) but at the $E_1$ page. This has been proved by 
Dimca and Saito for hypersurfaces with ordinary double points over $\mathbb{C}$ (and
 later extended by Saito to the case of quasi-homogeneous isolated singularities),
 and we adjust their results to our needs. 
 
 Unlike in the smooth case, the cohomology of the Koszul
 differential $df \wedge (\cdot)$ can occur not just at the $\Omega^{n+1}$ 
 diagonal, but also at the $\Omega^{n}$ diagonal. 
 When the degree of polynomial coefficients is high enough, 
 both groups stabilize to vector spaces of
 dimension  $\tau$, the number  of ordinary double points. Moreover, the 
 differential on $E_1$ page of the spectral sequence (induced by the 
 de Rham differential) eventually becomes an
 isomorphism and at the $E_2$ page the spectral sequence degenerates. 
 This reduces an apriori infinite computation on $E_0$ and $E_1$ pages 
 to a computation involving only $n$ rows of the $E_1$ page. 
 
 \bigskip
 \noindent
 In Section 2 we give definitions related to projective hypersurfaces with 
 ordinary double points which are equisingular over $\mathbb{Z}_q$. 
 We show that blowing up the singular set gives a divisor with 
 smooth normal crossings.  In Section 3, after recalling basic definitions 
 and results we identify rigid of $X_k$ and de Rham cohomology of $X_K$
 using a theorem of Baldasarri and Chiarelotto. In Section 4 we 
 apply the results of Dimca and Saito on the de Rham cohomology on 
 the principal open set. In Section 5 we explain a modified version of 
 Kedlaya's algorithm which  allows to import the Frobenius action 
 to the de Rham cohomology, via its isomorphism with rigid cohomology 
 computed through Monsky-Washnitzer approach. In Section 6 we briefly summarize
 the steps of our algorithm. Section 7 has a detailed discussion of a few examples in 
 the $n=3$ case (projective surfaces with ordinary double points). We tried to be 
 explicit in some details for the benefit of a reader interested in practical implementation 
 of our algorithm. 
 
 \bigskip
 \noindent 
 \textbf{Acknowledgements.} We are grateful to Prof. K. Kedlaya for sharing 
 the code of his algorithm. 
 
 \section{Equisingular deformation and its blowup.}
 
 \subsection{Definition of an equisingular ODP polynomial.}
 \label{equi}
 
 Assume we are given a homogeneous degree $N$ polynomial 
 $f \in \mathbb{Z}[x_0, \ldots, x_n]$ (one can also replace $\mathbb{Z}$ by the 
 ring of integers in a finite extension of $\mathbb{Q}$). For every commutative ring $R$
 it defines a relative hypersurface $X_R$  in the projective space $\mathbb{P}^n_R$ over
 $Spec(R)$. We will be interested in the special cases when $R$ is a finite field
 $k = \mathbb{F}_q$  with $q$ elements where $q=p^a$ for some prime $p > n-1$, 
 or unramified degree $a$ extension $K = \mathbb{Q}_q$ of the field $\mathbb{Q}_p$ of 
 $p$-adic numbers, or the ring $\mathcal{O} = 
 \mathbb{Z}_q$ of integers over $\mathbb{Z}_p$ in $K$. 
 In the case when  $R = k, K$ is a field, we have the standard definitions:
 
 \bigskip
 \noindent
 \textbf{Definition:} A point $P\in Z(f)\subset\mathbb{P}^n_F$ is a 
 \textit{singular point of 
 the hypersurface} $Z(f)$  if
 \begin{equation*}
 	\frac{\partial f}{\partial x_0}(P)=\dots=\frac{\partial f}{\partial x_n}(P)=0.
 \end{equation*}
Such a $P$ is an \textit{ordinary double point} (ODP) if the homogeneous $(n+1) \times 
(n+1)$  
Hessian matrix $
 	\Bigg(\frac{\partial^2f}{\partial x_i\partial x_j}\Bigg)_{0 \leq i, j \leq n}$
 has rank $n$ at $P$. This is equivalent to requiring that $P$ has a standard affine neighborhood
$U \simeq \mathbb{A}^n_R \subset \mathbb{P}^n_R$ such that the non-homogeneous
polynomial defining $Z(f) \cap U$ has a non-denerate affine $n \times n$ Hessian matrix 
at $P$.

\bigskip
\noindent 
Yet another way to phrase it is to use the partial derivatives $\frac{\partial f}{\partial x_i}$
to define a morphism 
\begin{equation}
\label{coker}
\mathcal{O}_{\mathbb{P}^n_R} (-N+1)^{\oplus (n+1)}\to 
\mathcal{O}_{\mathbb{P}^n_R} 
\end{equation}
and the ODP condition is equivalent to saying that for $R = F$ the cokernel of this morphism 
is supported on a zero dimensional \textit{reduced} subscheme of $\mathbb{P}^n_F$. 
 In the case when $R = \mathcal{O}$, we need a condition that ensure that 
 the ordinary double points of the induced hypersurface over 
 $k = \mathcal{O}/ \mathfrak{m}$ are not ``smoothed away" on the hypersurface
 over $K$, the fraction field of $\mathcal{O}$.

 \bigskip
 \noindent
 \textbf{Definition:}  A hypersurface $Z(f) \subset \mathbb{P}^n_{\mathcal{O}}$ given 
 by a homogeneous degree $N$ polynomial $f  \in \mathbb{Z}[x_0, \ldots, x_n]$
 has \textit{equisingular ordinary double points} if the cokernel of the morphism 
 (\ref{coker}) is scheme theoretically supported at a  disjoint union $W = \coprod W_s$ of closed subschemes $W_s$, 
 $s = 1, \ldots, \tau$
 such that the restriction of the structure morphism $\pi: \mathbb{P}^n_{\mathcal{O}}
 \to Spec(\mathcal{O})$ to each $W_\alpha$ is an isomorphism.

  \bigskip
  \noindent 
  In this paper we assume that $p > n - 1 $ (this is needed to apply 
  truncation results in Section o4 of \cite{Ge}, but not essential elsewhere) 
  and that $f  \in \mathbb{Z}[x_0, \ldots, x_n]$  is
  a homogeneous degree $N$ polynomial which induces a hypersurface with $\tau$
  equisingular ordinary double points over $Spec(\mathcal{O})$ (or a polynomial 
  with coefficients in a finite extension of $\mathbb{Z}$ which is unramified at $p$).  
 
 \subsection{Blowup of the equisingular hypersurface.}
 
We continue with the assumptions on $f$ imposed in the previous subsection. 
 
 \begin{lemma}  
 	\label{blowup}
 	Let $\rho: Y \to \mathbb{P}^n_{\mathcal{O}}$ be the blowup of the 
 	closed subscheme $W$ introduced at the end of the previous subsection. 
 	Then the preimage $D$ of $X = Z(f)$ (with the reduced scheme structure):
 	$$
 	D = \rho^{-1} (X) = \widehat{X} \cup E_1 \cup \ldots \cup E_\tau
 	$$
 	has smooth irreducible components with normal crossings. 
 	\end{lemma}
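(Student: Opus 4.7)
The plan is to localize around each component $W_s$ of the equisingular locus and reduce the claim to an explicit Morse-type blowup computation in one affine chart. Since the subschemes $W_s$ are pairwise disjoint by hypothesis, the exceptional divisors $E_s$ they produce are also disjoint and can be handled one at a time. Fix an index $s$; because $\pi|_{W_s}$ is an isomorphism onto $\operatorname{Spec}(\mathcal{O})$, after restricting to a standard affine chart $\mathbb{A}^n_{\mathcal{O}} \subset \mathbb{P}^n_{\mathcal{O}}$ containing $W_s$ and performing an $\mathcal{O}$-linear translation we may assume $W_s = V(y_1,\ldots,y_n)$ in the chart coordinates. Let $g(y) \in \mathcal{O}[y_1,\ldots,y_n]$ be the dehomogenization of $f$. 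Since $W_s \subset X$ and all the homogeneous partials vanish on $W_s$, both $g(0)=0$ and every $(\partial g/\partial y_i)(0)=0$, so $g \in (y_1,\ldots,y_n)^2$.

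The first key step is to turn the equisingularity hypothesis into the concrete statement that the Hessian $H = \bigl(\partial^2 g/\partial y_i \partial y_j\bigr)(0) \in \operatorname{Mat}_n(\mathcal{O})$ lies in $\operatorname{GL}_n(\mathcal{O})$. In the affine chart the image of (\ref{coker}) is generated by $(\partial f/\partial x_i)(1,y)$ for $i=0,\ldots,n$; Euler's identity together with $g \in (y)^2$ shows that this ideal equals $(g,\partial g/\partial y_1,\ldots,\partial g/\partial y_n)$ locally. The scheme-theoretic support condition then says this equals $(y_1,\ldots,y_n)$ in the local ring at a closed point of $W_s$, whose maximal ideal is $(\pi,y_1,\ldots,y_n)$ for a uniformizer $\pi$ of $\mathcal{O}$. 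Passing to $(y)/(\pi,y)(y) \cong k^n$ (with $k = \mathcal{O}/\pi$), the generator $g$ contributes nothing since it lies in $(y)^2$, while $\partial g/\partial y_i$ maps to the $i$-th row of $H \bmod \pi$. By Nakayama, the ideal equality forces $H \bmod \pi \in \operatorname{GL}_n(k)$, hence $H \in \operatorname{GL}_n(\mathcal{O})$.

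The second step is the explicit blowup computation. Writing $g = g_2 + g_3 + \ldots$ with each $g_k$ homogeneous of degree $k$ in $y$, the form $g_2(y) = \tfrac{1}{2}\sum H_{ij}y_iy_j$ is a non-degenerate quadratic form over $\mathcal{O}$ (we use $2 \in \mathcal{O}^\times$, which holds since $p > n-1 \geq 2$). Cover the preimage of $W_s$ in $Y$ by the $n$ standard affine charts of $\operatorname{Bl}_0 \mathbb{A}^n_{\mathcal{O}}$; in the $i$-th chart with coordinates $(z_i, (s_j)_{j \neq i})$ and substitutions $y_i = z_i$, $y_j = z_i s_j$, homogeneity of each $g_k$ gives
\[
\rho^* g \;=\; z_i^2 \bigl(g_2(s_1,\ldots,1,\ldots,s_n) + z_i\, g_3(s_1,\ldots,1,\ldots,s_n) + \ldots \bigr) \;=\; z_i^2\, \tilde{g}(z_i, s),
\]
with $\tilde{g}(0,s) = \bar{g}_2(s) := g_2(s_1,\ldots,1,\ldots,s_n)$ not identically zero. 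Taking the reduced preimage yields $E_s \cup \widehat{X}$ on the chart, where $E_s = V(z_i)$ and $\widehat{X} = V(\tilde{g})$. Globally $E_s \cong \mathbb{P}^{n-1}_{\mathcal{O}}$, which is smooth over $\operatorname{Spec}(\mathcal{O})$.

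Finally I verify smoothness of $\widehat{X}$ and transversality with $E_s$ at their intersection. On $\widehat{X} \cap E_s$ (where $z_i = 0$ and $\bar{g}_2(s) = 0$), the gradient of $\tilde{g}$ with respect to $(z_i, (s_j))$ is $\bigl(\partial \tilde{g}/\partial z_i,\, \nabla_s \bar{g}_2\bigr)$. Euler's identity $2 g_2 = \sum_k y_k \partial g_2/\partial y_k$ evaluated at $y = (s_1,\ldots,1,\ldots,s_n)$ and using $\partial \bar{g}_2/\partial s_j = \partial g_2/\partial y_j$ at this point gives $2\bar{g}_2(s) = \partial g_2/\partial y_i + \sum_{j \neq i} s_j \partial \bar{g}_2/\partial s_j$, so simultaneous vanishing of $\bar{g}_2$ and $\nabla_s \bar{g}_2$ would force $(\nabla g_2)(s_1,\ldots,1,\ldots,s_n) = 0$, contradicting the non-degeneracy of $g_2$ (whose gradient vanishes only at the origin, ruled out by $y_i = 1$). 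Hence $\nabla_s \bar{g}_2$ is nonzero on $\widehat{X} \cap E_s$, which gives simultaneously the smoothness of $\widehat{X}$ along this locus and its transversality with $V(z_i)$. Away from $E_s$ the map $\rho$ is an isomorphism onto $X \setminus W$, which is smooth by assumption. The main obstacle is the first step: carefully setting up the affine form of the equisingularity condition and invoking Nakayama to upgrade from ideal equality to invertibility of $H$ in $\operatorname{GL}_n(\mathcal{O})$; after that, the blowup computation and the Euler-formula check are routine.
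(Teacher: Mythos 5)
Your proof takes essentially the same route as the paper's: localize to an affine chart centered at a component $W_s$, use Euler's identity to reduce the equisingularity hypothesis to the statement that $g$ and its partials generate the ideal of $W_s$, observe this forces the linear parts of the partials (equivalently, the Hessian) to be invertible over $\mathcal{O}$, and then check smoothness and transversality on the blowup via the nondegenerate quadratic leading form. Your version simply makes two steps explicit that the paper compresses into one sentence — the Nakayama argument upgrading ideal equality to $H \in \operatorname{GL}_n(\mathcal{O})$, and the chart-by-chart Euler-identity verification that $\widehat{X}$ is smooth and meets $E_s$ transversely — and these elaborations are correct.
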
 

 \begin{proof}
 	Note that the question is local along $D$ and we can restrict to 
 	a neighborhood of a point $Q \in D$. 
 	
 	If $\rho(Q)$ is not in the
 	support of $W$ then the only component of $D$ that passes through 
 	$Q$ is the proper transform $\widehat{X}$ which is smooth at $Q$
 	since $\rho(Q)$ is a smooth point of $X$ and $\rho$ is an isomorphism 
 	over some affine neighborhood of $\rho(Q)$. 
 	
 	Now assume $P = \rho(Q)$ is in the support of some $W_s$. We can assume 
 	$s = 1$ are replacing $Q$ by its specialization we can assume that $Q$ and
 	$P$ are in the respective closed fibers over $Spec(k) \subset Spec(\mathcal{O})$. 
 	We can also assume that $P = [1: 0: \ldots : 0] \in \mathbb{P}^n_k$. We can replace 
 	the projective space by $\mathbb{A}^n_{\mathcal{O}}$ with affine coordinates
 	$y_1 = x_1/x_0, \ldots, y_n = x_n/x_0$ and $X$ by the zero set of $g (y_1, \ldots, 
 	y_n) = f(1, y_1, \ldots, y_n)$. 
 	
 	By the Euler identity $g$ and its partials generate the ideal $I$ of the subscheme
 	$W_1$. By assumption on $W_1$ this means that the constant and linear terms
 	of $g$  are zero, hence we can write $g = g_2 + g_3 + \ldots + g_N$ where 
 	each $g_j$ is a homogeneous degree $j$ polynomial in $y_1, \ldots, y_n$ with 
 	coefficients in $\mathcal{O}$.
 	
 	By the standard results on blowups, see e.g. Chapter 1.4 of \cite{Ha}, the 
 	exceptional divisor $E_1$ of the blowup of $W_1$ in $\mathbb{A}^n_{\mathcal{O}}$ is 
 	smooth. The proper transform of $X$ is smooth away from the exceptional divisor
 	so we just need to show that at every point $Q \in \widehat{X} \cap E_1$ the 
 	proper transform is smooth at $Q$ and the two hypersurfaces are transversal 
 	(i.e. the differetials over $\mathcal{O}$ of the two local definiing equations are
 	independent). Both facts hold since $g$ and its partial derivatives generate $I$ 
 	hence the linear parts of partial derivatives freely generate the $\mathcal{O}$
 	of linear polynomials in $y_1, \ldots, y_n$ with coefficients in $\mathcal{O}$. 
 \end{proof}
 
 \subsection{A non-equisingular example.}
 For an example of a lift that is not equisingular, for $n=3$, we use Example 5.6 on page 22 of \cite{Ve} which is a quartic with 2 ordinary double points over $\mathbb{C}$ but 4 over
 the closure of $\mathbb{F}_5$.  This surface is also mentioned on page 7 of \cite{DeA} where the variables $x_1$ and $x_3$ have been swapped and then written in a ``cleaner" form
 \begin{equation} \label{eq:quartic2nodes}
 	x_0x_1(x_0^2+x_1^2+x_2^2+x_3^2)+x_2x_3(x_0^2+x_1^2-x_2^2-x_3^2)-2x_2^2x_3^2+2x_0^2x_1^2+2x_0x_1x_2x_3=0.
 \end{equation}
Over a characteristic zero field, the polynomial $f$ on the left hand side has two ordinary double points  at $[1:-1:0:0]$ and $[0:0:-1:1]$.  Consider it now over a field of characteristic 5. One must find the number of singularities over $\mathbb{F}_5$ and its finite extensions.     Let $\alpha\in\mathbb{F}_{25}$ such that $\alpha^2=3$, then the singular points of the quartic in (\ref{eq:quartic2nodes}) are
 \begin{align*}
 	[1:4:0:0] && [0:0:4:1] && [\alpha:\alpha:1:1] && [4\alpha:4\alpha:1:1].
 \end{align*}
We conclude that our hypersurface is not equisingular in this case.
 
 \section{Zeta function, rigid and de Rham cohomology}
 
 \subsection{Zeta function and traces on rigid cohomology}
 	We recall the standard definitions and results regarding zeta functions, as they 
 	apply to the situation considered.  The projective zeta function of $f$ is defined as
 	\begin{equation} \label{eq:zetaDef}
 		\zeta (f,T)=\exp\Bigg(\sum_{r=1}^{\infty}\#X_{k(r)} \frac{T^r}{r}\Bigg)
 	\end{equation}
 	where $X_{k(r)}=\{x\in\mathbb{P}^n_{k(r)}|f(x)=0\}$ and  $k(r)$ stand for the finite feld
 $\mathbb{F}_{q^r}$. 
 	
 	In \cite{Dw} Dwork proved that the zeta function of a variety is a rational function. Weil had proposed using some type of cohomology theory to prove his conjectures however Dwork's proof uses $p$-adic analysis and no cohomology. It wasn't until Grothendieck's use of étale cohomology \cite{Gr} that the rationality of the zeta function was proved using cohomological methods. This breakthrough led to the proofs of all the Weil conjectures, the last being the analogue of the Riemann Hypothesis by Deligne \cite{DeP}. When it comes to actually computing the zeta function, Kedlaya in section 1.2 of \cite{Ke} points out that, étale cohomology is not as useful as $p$-adic cohomology such as rigid, see also the introduction of \cite{Ge}.  Rigid cohomology can
 	defined both affine and projective varieties whether they be smooth or singular, and in fact there are
 	two versions: usual and with compact support. For our projective hypersurface $X$ constructed
 	from $f$ we write $H^\bullet_{rig}(X)$, $H^\bullet_{rig, c}(X)$, both finite dimentional 
 	vector spaces over the field $K$. It is an important part of the 
 	standard theory, cf. \cite{LS}, that the Frobenius automorphism of $k$ induces a linear map 
 	$F^*$ on both version of cohomology, such that the following crucial trace formula holds
 	\begin{equation} \label{eq:trace}
 		\#X_{k(r)}=\sum_{i=0}^{2\dim(X)}(-1)^i\text{tr}((F^*)^r|H^i_{\text{rig,c}}(X))
 	\end{equation}
 	Substituting equation (\ref{eq:trace}) into the definition of the zeta function yields
 	\begin{equation} \label{eq:zetaProd}
 		\zeta(f,T)=\prod_{i=0}^{2\dim(X)}\det(1-TF^*|H^i_{\text{rig,c}}(X))^{(-1)^{i+1}}.
 	\end{equation}
 	where $\det(1-TF^*|H^i_{\text{rig,c}}(X))$ is the reciprocal of the characteristic polynomial of the map $F^*$. Equivalently, in the smooth case by Poincare duality,  see Proposition 6.4.18 in \cite{LS}, the information contained on the compactly supported rigid cohomology can
 	be rephrased in terms usual rigid cohomology: 
 	\begin{equation} \label{eq:trace-dual}
 		\#X_{k(r)}=\sum_{i=0}^{2\dim(X)}(-1)^iq^{r(n-1)}\text{tr}((F^*)^{-r}|H^i_{\text{rig}}(X))
 	\end{equation}
 	
 	\bigskip
 	\noindent 
 	We now consider the complement $U_k=\mathbb{P}^n_k\setminus X_k$. 
 	 This section and the following are driven by the trivial, but important fact that over $\mathbb{F}_q$
 	\begin{equation*}
 		|X_k|+|U_k|=|\mathbb{P}^n_k|=q^n + \ldots +q^2+q+1.
 	\end{equation*}
 	Therefore finding the zeta function of $X$ is equivalent to finding the zeta function of $U$ and we choose to calculate the latter.  The two main reasons for working with $U$, instead of $X$, are that $U$ is smooth and affine. Yet another feature which makes our particular approach computable
 	is the fact that we choose $f$ giving a hypersurface with equisingular ordinary double points. 
 
 	Using the above equation we can write the zeta function of $f$ as
 	\begin{equation}
 		\label{eq:defineP}
 		\zeta(f,T)=\frac{Q_n(T)^{-n} Q_{n+1}(T)^{-n-1}}{(1-T)(1-qT)\ldots (1-q^nT)} 
 	\qquad 
 		Q_i(T)=\det(1-q^iT(F^*)^{-1}|H^i_{\text{rig,c}}(U_k)).
 	\end{equation}
 (seee e.g. \cite{Ge} or \cite{LS} ).
 	In Section 4 we use results on spectral sequences to prove that
 	\begin{equation} \label{eq:degP}
 		\deg(Q_n(T)) - \deg(Q_{n+1}(T)) =\frac{1}{N}((N-1)^{n+1}+N-1)-\tau.
 	\end{equation}
 Moreover, for $n$ odd $\deg(Q_{n+1}(T))=0$ while for $n$ even $\deg(Q_{n+1}(T)) = \tau$. 
 
 \subsection{Isomorphism with de Rham cohomology}
 
Lemma \ref{blowup} has an important consequence, see Theorem 5.2 in \cite{ChSt} 
 and Corollary 2.6 in \cite{BaCh} 
 
 \begin{theorem}
 	\label{thm:isom}
 	In the situation considered in section \ref{equi}, 
 	there exists an isomorphism between the algebraic
 	de Rham cohomology of $U_K$ and the rigid cohomology of $U_k$:
 	$$
 	H^\bullet_{DR}(U_K) \to H^\bullet_{rig}(U_k) 
 	$$
 	\end{theorem}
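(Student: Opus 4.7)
The plan is to apply the Baldasarri--Chiarelotto comparison theorem (Corollary 2.6 of \cite{BaCh}, which also appears as Theorem 5.2 of \cite{ChSt}) to the open immersion $U_{\mathcal{O}} \hookrightarrow Y$ supplied by the blowup $\rho : Y \to \mathbb{P}^n_{\mathcal{O}}$ of Lemma \ref{blowup}. That theorem asserts exactly the desired isomorphism provided $U$ admits a compactification by a smooth projective $\mathcal{O}$-scheme whose boundary is a relative strict normal crossings divisor with smooth components, so the entire task reduces to checking these hypotheses for $(Y,D)$.

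First I would identify $U_{\mathcal{O}}$ with $Y \setminus D$. Since $\rho$ is an isomorphism over $\mathbb{P}^n_{\mathcal{O}} \setminus W$ and $W$ is contained in $X = Z(f)$, removing $D = \rho^{-1}(X)$ from $Y$ produces exactly $\mathbb{P}^n_{\mathcal{O}} \setminus X = U_{\mathcal{O}}$. Next I would verify that $Y$ is smooth and projective over $\mathcal{O}$: projectivity is automatic because blowing up a projective scheme along a closed subscheme is projective, and smoothness follows because, by the equisingularity assumption, each $W_s$ is a section of the structure map $\pi$, hence a smooth $\mathcal{O}$-subscheme of $\mathbb{P}^n_{\mathcal{O}}$ of the expected codimension, whose blowup remains smooth over $\mathcal{O}$. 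The fact that $D = \widehat{X} \cup E_1 \cup \ldots \cup E_\tau$ is a strict normal crossings divisor with smooth irreducible components is the content of Lemma \ref{blowup} itself. With all hypotheses satisfied, base-changing to $K$ gives a smooth projective pair $(Y_K, D_K)$ with open complement $U_K$, and base-changing to $k$ gives a smooth projective pair $(Y_k,D_k)$ with open complement $U_k$; the cited comparison theorem then produces the required isomorphism.

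The main obstacle here is not in the cohomological comparison, which is entirely quoted from the literature, but rather in ensuring that smoothness and the normal crossings condition hold in the relative sense over $\mathcal{O}$ rather than only on individual fibres; this is precisely what the equisingularity hypothesis and Lemma \ref{blowup} are engineered to guarantee. Once those inputs are in place, Theorem \ref{thm:isom} follows by direct invocation of the Baldasarri--Chiarelotto result.
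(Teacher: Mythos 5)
Your proof takes exactly the paper's route: Lemma \ref{blowup} supplies the smooth projective compactification $Y \supset U_{\mathcal{O}}$ with relative normal crossings boundary $D$, and the result then follows by direct appeal to the comparison theorem of Baldassarri--Chiarelotto (Corollary 2.6 of \cite{BaCh}) and Chiarellotto--Le Stum (Theorem 5.2 of \cite{ChSt}). The paper records the same argument more tersely; your version simply makes explicit the hypothesis-checking (projectivity and smoothness of $Y$ over $\mathcal{O}$, identification $U_{\mathcal{O}} = Y\setminus D$) that the paper leaves implicit.
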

An important detail is that apriori the algebraic  de Rham cohomology group does not carry 
an action of an operator induced by Frobenius, hence some information on the above 
isomorphism is needed to transport this action from rigid cohomology, in terms which 
are explicit enough for the computaiton. 

At this point we use the fact that $U$ is smooth and affine: the de Rham cohomology can 
be computed in terms of the algebra $A$ of regular functions on $U_K$ while for the
rigid cohomology we can use the Monsky-Washnitzer model, which is rather close to 
de Rham but uses a certain ``dagger completion" $A^\dagger$. Thus, an important property of 
the isomorphism quoted above is that in the smooth affine case it is induced by 
the embedding $A \to A^\dagger$.  This follows from the proofs in \cite{BaCh} and \cite{ChSt}. 
We delay the detailed discussion of this point until Section 5.

 \section{Pole order spectral sequence and degeneration}
 
 For most of this Section we will discuss results of Dimca and Saito which were originally 
 stated over $\mathbb{C}$, eventually using an embedding $K \to \mathbb{C}$ to relate
 these results to our situation. 
 
 \subsection{de Rham cohomology of the principal open set}
 Recall that $U = \mathbb{P}^n \setminus Z(f)$ is both smooth and affine so the computation of
 the de Rham cohomology 
 does not need to involve a Čech covering.  We follow Chapter 6 of Dimca's book \cite{Di}.  Let $S=\mathbb{C}[x_0,x_1,\ldots,x_n]$ and consider the graded $S$-module $\Omega^l$ of polynomial differential $k$-forms.  Any element of $\Omega^l$ can be written as
 \begin{equation*}
 	\omega=\sum_{I}c_Idx_{i_1}\wedge\dots\wedge dx_{i_l}
 \end{equation*}
 where the finite summation runs over $I=(0 \leq i_1 < \dots < i_l \leq n)$ and $c_I\in S$.  
  In this paper we only consider the trivial weights, $\deg(x_i)=\deg(dx_i)=1$ making the total degree
 \begin{equation*}
 	\deg(x_0^{a_0}\dots x_n^{a_n}dx_{i_1}\wedge\dots\wedge dx_{i_l})=a_0+\dots+a_n+l.
 \end{equation*}
 Now let $S_m\subset S$ be the set of homogeneous polynomials of degree $m$ and define $\Omega^l_m$ to be the set of $l$-forms whose total degree is $m$, i.e., if $\omega\in\Omega^l_m$ then using the notation of (1.12) each $c_I$ is in $S_{m-l}$.
 
 Relating  polynomial differential forms to forms on $U_K$ is greatly simplified by using the
  map $\Delta$ which is the contraction with the Euler vector field $E=\sum_{i=0}^{n}x_i\partial/\partial x_i$.  By Lemma 1.15 from \cite{Di} 
 \begin{lemma}
  (A) There is a unique $S$-linear operator $\Delta:\Omega^l_m\rightarrow\Omega^{l-1}_m$ satisfying the properties:
 \newline (i) $\Delta(\omega\wedge\omega')=\Delta(\omega)\wedge\omega'+(-1)^l\omega\wedge\Delta(\omega')$ for $\omega\in\Omega^l, \omega'\in\Omega^{s}$;
 \newline (ii) $\Delta(df)=Nf$ for any $f\in S_N$.
 \newline (B) For a homogeneous differential form $\omega\in\Omega^l_m$ we have
 \begin{equation*}
 	\Delta d(\omega)+d\Delta(\omega)= m \omega.
 \end{equation*}
 \newline (C) The sequence 
 \begin{equation*}
 	0\rightarrow\Omega^{n+1}\xrightarrow{\Delta}\Omega^n\xrightarrow{\Delta}\dots\xrightarrow{\Delta}\Omega^1\xrightarrow{\Delta}\Omega^0\rightarrow0
 \end{equation*}
 is exact, except for the last term where
 \begin{equation*}
 	\text{im}(\Delta:\Omega^1\rightarrow\Omega^0)=(x_0,\dots,x_n),
 \end{equation*}
 the maximal ideal in $S$ generated by $x_0,\dots,x_n$. $\square$
 \end{lemma}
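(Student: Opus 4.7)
The plan is to realize $\Delta$ explicitly as interior product by the Euler vector field $E = \sum_i x_i \, \partial/\partial x_i$ and then derive all three parts from standard Cartan/Koszul calculus.

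For part (A), I would handle uniqueness first. Property (i) together with the forced vanishing $\Delta|_{\Omega^0} = 0$ (since the target $\Omega^{-1}$ is zero) makes $\Delta$ a graded $\mathbb{C}$-derivation of degree $-1$ on the algebra $\Omega^\bullet = S \otimes_{\mathbb{C}} \bigwedge(dx_0, \ldots, dx_n)$; it is therefore determined by its values on the algebra generators $dx_i$, and property (ii) with $f = x_i \in S_1$ pins these to $\Delta(dx_i) = x_i$. For existence I would define $\Delta$ by the interior product formula
\begin{equation*}
\Delta\bigl(c \, dx_{i_1} \wedge \cdots \wedge dx_{i_l}\bigr) = \sum_{k=1}^{l} (-1)^{k-1} c \, x_{i_k} \, dx_{i_1} \wedge \cdots \wedge \widehat{dx_{i_k}} \wedge \cdots \wedge dx_{i_l},
\end{equation*}
extended $S$-linearly. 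A routine sign check gives (i), and (ii) for $f \in S_N$ is precisely Euler's identity $\sum_i x_i \, \partial f/\partial x_i = N f$.

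For (B), set $L := \Delta d + d\Delta$. As a graded commutator of two graded derivations (of degrees $+1$ and $-1$), $L$ is itself an even derivation of $\Omega^\bullet$, so it suffices to verify $L(\omega) = m\,\omega$ on the algebra generators. The checks are one-liners: $L(x_i) = \Delta(dx_i) = x_i$ and $L(dx_i) = d\Delta(dx_i) = d(x_i) = dx_i$, each with total degree $m = 1$. Since the total-degree grading is additive under $\wedge$ and $L$ is a derivation respecting this grading, the identity $L = m\cdot \mathrm{id}$ on $\Omega^l_m$ propagates to all of $\Omega^\bullet$.

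For (C), exactness at each $\Omega^l$ with $l \geq 1$ is an immediate corollary of (B): a $\Delta$-closed element $\omega \in \Omega^l_m$ has total degree $m \geq l \geq 1$, so $m \neq 0$, and (B) gives $\omega = \Delta(m^{-1} d\omega)$, placing $\omega$ in the image of $\Delta$. The remaining claim is the computation of $\mathrm{im}(\Delta : \Omega^1 \to \Omega^0)$: the formula reads $\Delta(\sum_i c_i \, dx_i) = \sum_i c_i x_i$, and as the $c_i$ range over $S$ this traces out exactly the ideal $(x_0, \ldots, x_n)$. The only genuinely substantive step is the derivation-extension argument in (A); once that is set up, (B) and (C) are formal consequences, so I expect no real obstacle here beyond bookkeeping with signs.
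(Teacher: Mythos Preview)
Your proof is correct and complete. The paper itself does not prove this lemma: it is quoted verbatim as Lemma~1.15 from Dimca's book \cite{Di}, with the $\square$ at the end of the statement signaling that no proof is given. Your argument---realizing $\Delta$ as $\iota_E$ for the Euler field, using the Cartan identity $\mathcal{L}_E = d\iota_E + \iota_E d$ checked on generators for~(B), and reading~(C) off~(B) as a contracting homotopy in positive total degree---is exactly the standard one and is what lies behind the cited reference.
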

 
 \bigskip
 \noindent
 Note that property (ii) is a restatement of Euler's Identity in terms of $\Delta$ and $d$.  Now by Proposition 1.16 of \cite{Di}, any differential $l$-form $\omega$ (for $l>0$) on the open set $U$ can be written as
 \begin{equation} \label{eq:formOnU}
 	\omega=\frac{\Delta(\gamma)}{f^s}
 \end{equation}
 for some integer $s>0$ and $\gamma\in\Omega^{l+1}_{sN}$.  At this point Dimca asks how can $d\omega$, from (\ref{eq:formOnU}), be expressed in a similar form  
 \begin{equation} \label{eq:domega}
 	d\omega=\frac{\Delta(\delta)}{f^{s+1}}
 \end{equation}
 for some $\delta\in\Omega^{l+2}_{(s+1)N}$.  Most calculations below can be found on page 181 of \cite{Di} but we include them since the formula for  $\delta$ in (\ref{eq:domega}) gives rise to a double complex and the spectral sequence, on which our algorithm is based.  This is covered in the next section,  while now we consider $d\omega$ from (\ref{eq:domega})
 \begin{equation}
 	d\omega=d\Bigg(\frac{\Delta(\gamma)}{f^s}\Bigg)=d(f^{-s}\wedge\Delta(\gamma))
 	 =-sf^{-s-1}df\wedge\Delta(\gamma)+f^{-s}\wedge d(\Delta(\gamma)). \label{eq:toSimplify}
 \end{equation}
 By Lemma 4.1 (A), (B)
 \begin{equation}
 	\Delta(df\wedge\gamma)   =Nf\gamma-df\wedge\Delta(\gamma); 
 	\qquad
 	d\Delta(\gamma)=sN\gamma-\Delta d(\gamma), \text{ for }\gamma\in\Omega^{l+1}_{sN}. \label{eq:part2}
 \end{equation}
 Substituting (\ref{eq:part2}) into (\ref{eq:toSimplify}) yields
 \begin{equation*}
 	d\omega  =-sf^{-s-1}(Nf\gamma-\Delta(df\wedge\gamma))+f^{-s}\wedge (sN\gamma-\Delta(d\gamma)) =-\frac{\Delta(fd\gamma-sdf\wedge \gamma)}{f^{s+1}}
 \end{equation*}
 Finally we can write $d\omega$, from (\ref{eq:domega}), as
 \begin{equation} \label{eq:domegaFull}
 	d\omega=d\Bigg(\frac{\Delta(\gamma)}{f^s}\Bigg)=-\frac{\Delta(d_f(\gamma))}{f^{s+1}}
 	\qquad \textrm{ where } d_f(\gamma) := f d\gamma - \frac{|\gamma|}{N} df \wedge \gamma. 
 \end{equation}
In other words,
 for known $f^s$ in the denominator, computation of the de Rham differential of a form on $U$ 
reduces to computation of a ``deformed" differential of a form on $\mathbb{C}^{n+1}$, built out
of $d$ and $df\wedge$.  These differentials give rise to a double complex considered below.

The total degree of a polynomial differential form is preserved both by the
de Rham differential and the Euler vector field contraction. Since differential forms on 
$U$ are represented by fractions of homogeneous degree zero, we can restrict to polynomial differential forms of total degree $sN$ for $s \geq 0$. 
 
 \subsection{Pole order spectral sequence.}

 The formulas of the previous section allow us to obtain information about the de Rham cohomology of
 $U_{\mathbb{C}}$. On one hand, we have a dg algebra $\Omega(A)^\bullet$ given by the global sections
 of the de Rham complex with the filtrations
 $$
 F^s \Omega(A)^l = \Big\{ \frac{\Delta(\gamma)}{f^a}  \textrm{ such that } 
 \gamma \in \Omega^{l+1}_{aN}, |\gamma| = a N \textrm{ and } a \leq l - s\Big\}
 $$
 (which is compatible with the de Rham differential by the previous subsection). On the other hand
 we have a bicomplex $B^{t, s}  = \Omega^{s+t+1}_{sN}$ with differentials $d' = d$ and 
 $d''(\gamma) = - \frac{|\gamma|}{N} df \wedge \gamma$ and the filtration 
$$
F^s B^l = \bigoplus_{r \geq s} B^{r, l-r}
$$
The two filtrations give standard spectral sequences $E_r^{t, s} (A)$ and $E_r^{t, s} (B)$ with 
$E_1^{t, s} $ terms given by $H^{s+t}(F^s(\cdot)/F^{s+1}(\cdot))$ and a morphism $\{\delta_r\}_r$
	of spectral sequences induced by $\gamma \mapsto \frac{\Delta(\gamma)}{f^t}$
	for $\gamma \in B^{t, s} $.  To formulate the next result one needs to adjust both spectral
	sequences by replacing $E_0^{0, 0}(A) = \mathbb{C}$ by zero and mod out 
	$E_0^{-1, 0}(B)$, $E_0^{0, 0}$ by the 1-dimensional subspaces spanned by $1$ and $df$, respectively. 
	After this modification (which kills the de Rham cohomolgy of $U$ in degree zero and
	does not affect $E_1(B)$) we can state Theorem 6.2.9 in \cite{Di}:
	\begin{theorem}
		The morphism of reduced spectral sequences $\{\delta_r\}: \widetilde{E}_r^{t, s} (A) \to 
		\widetilde{E}_r^{t, s} (B)$ is an isomorphism for all $r \geq 1$. 
	\end{theorem}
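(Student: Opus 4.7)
The plan is to prove the theorem in two stages. First, establish that $\delta_1$ is an isomorphism on the $E_1$ page; then invoke the standard comparison theorem for morphisms of spectral sequences — any morphism that is an isomorphism at some $E_r$ is automatically an isomorphism on every later page $E_{r'}$, $r' \geq r$. For this comparison to apply, $\{\delta_r\}$ must come from a filtered chain map $B \to \Omega(A)^\bullet$. This is built into the construction: the map $\gamma \mapsto \Delta(\gamma)/f^t$ lands in $F^s \Omega(A)^{s+t}$ by the very definition of the pole order filtration, and the identity (\ref{eq:domegaFull}), namely $d(\Delta(\gamma)/f^t) = -\Delta(d_f \gamma)/f^{t+1}$ with $d_f = f\, d - (|\gamma|/N)\, df\wedge$, shows that the de Rham differential on $\Omega(A)^\bullet$ pulls back (up to sign) to the total differential $d' + d''$ of $B$.

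To identify the $E_1$ pages, I would exhibit both sides as the same Koszul cohomology of $df\wedge$ acting on polynomial forms of fixed total degree. On the $B$ side, $E_1^{t,s}(B)$ is the $t$-th cohomology of $(B^{\bullet,s}, d'')$, and since $d''$ is a nonzero scalar times exterior multiplication by $df$, this is literally the Koszul cohomology of $df\wedge$. On the $A$ side, the induced differential on $\mathrm{gr}^s_F \Omega(A)^\bullet$ at the $E_0$ level, read off from (\ref{eq:domegaFull}), retains only the $-(|\gamma|/N)\, df\wedge$ component modulo lower pole order, because the $f\, d\gamma$ piece drops one power of $f$ in the denominator and therefore lies in $F^{s+1}$. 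The Euler identity $\Delta(df\wedge\gamma) = Nf\gamma - df\wedge\Delta(\gamma)$, stated as part (A) of the preceding lemma, then rewrites $E_1^{t,s}(A)$ as the same Koszul cohomology, and under this identification $\delta_1$ becomes (a nonzero scalar multiple of) the identity.

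The argument is routine except at the corners where the $\Delta$-sequence of part (C) of the preceding lemma fails to be exact. Exactness fails only at $\Omega^0$, whose image under $\Delta$ is the maximal ideal rather than all of $\Omega^0$. This single defect produces the constant class $1 \in H^0_{DR}(U)$ on the $A$ side and the two residual Koszul classes represented by $1$ and by $df$ on the $B$ side; the three modifications prescribed in the statement — suppressing $E_0^{0,0}(A) = \mathbb{C}$ and quotienting $E_0^{-1,0}(B)$ and $E_0^{0,0}(B)$ by the lines spanned by $1$ and by $df$ respectively — are precisely what is needed to absorb these defects.

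The main obstacle, in my view, is the boundary bookkeeping near these corners: one must check that the three prescribed modifications are exactly what is required, that no further spurious classes survive, and that $\delta$ continues to intertwine the differentials after the modifications are imposed. Once the $E_1$ isomorphism $\widetilde{E}_1^{t,s}(A) \to \widetilde{E}_1^{t,s}(B)$ is secured, the comparison theorem promotes it to an isomorphism on every subsequent page, completing the proof.
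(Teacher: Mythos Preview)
The paper does not prove this theorem; it is quoted verbatim as Theorem~6.2.9 from Dimca's book \cite{Di}, so there is no in-paper argument to compare against.

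Your outline is the standard one and matches what one finds in \cite{Di}: build a morphism of filtered complexes via $\gamma \mapsto \Delta(\gamma)/f^a$, verify from (\ref{eq:domegaFull}) that it intertwines the total differential of $B$ with the de Rham differential, identify both $E_1$ pages with Koszul cohomology of $df\wedge$, patch the corners with the three prescribed modifications, and then invoke the comparison theorem for morphisms of spectral sequences. All of these ingredients are correct.

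The one place where your sketch is thinner than it looks is the identification of $E_1^{t,s}(A)$ with Koszul cohomology. The graded piece $\mathrm{gr}^s_F\Omega(A)^{l}$ is not literally $\Omega^{l+1}_{aN}$ but the quotient of $\{\Delta(\gamma)/f^a : \gamma \in \Omega^{l+1}_{aN}\}$ by those forms expressible with pole order $\leq a-1$; unwinding this uses the exactness of the $\Delta$-sequence (part~(C) of the preceding lemma) in both directions, not just the single Euler identity you cite. Concretely, one must check that $\Delta(\gamma)/f^a$ vanishes in $\mathrm{gr}^s$ exactly when $\gamma \in f\cdot\Omega^{l+1}_{(a-1)N} + \Delta(\Omega^{l+2}_{aN})$, and then that passing to $d_0$-cohomology of this quotient recovers the Koszul cohomology $H^{l+1}(K_f^\bullet)_{aN}$. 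This is routine but not a one-line consequence of $\Delta(df\wedge\gamma)=Nf\gamma-df\wedge\Delta(\gamma)$; Dimca handles it by a short diagram chase using the full $\Delta$-resolution. Once that is done, your corner analysis and the comparison-theorem step are exactly right.
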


\noindent
	In fact, we will see below that in our case all differentials on pages $E_r$ with $r \geq 2$ are 
	trivial.

 \bigskip
 \noindent
  To focus on $E_0(B)$, as shown in  Figure \ref{fig:E0page}, recall that $\Omega^l_{j+l} =
  S_j\Omega^l$
 be the set of all $l$-forms whose coefficients are homogeneous polynomials is $S=\mathbb{C}[x_0,x_1,\ldots,x_n]$ of degree $j$.  
  For each module $S_j\Omega^k$ in Figure \ref{fig:E0page},  $j+k$ is a multiple of $N=\deg(f)$.  
   We set $S_{N-k}\Omega^k=0$ whenever the degree of $N$ is less than $k$.
   The vertical arrows in Figure \ref{fig:E0page} are the Koszul differentials $(-s) \cdot df\wedge$ (the exterior product) and the horizontal arrows are grayed out because we apply them on the $E_1$ page, are induced the de Rham differentials $d$.  The two differentials  $df\wedge$ and  $d$ anti-commute.

 \begin{figure}[h!]
 	\begin{center}
 		\begin{tikzpicture}
 			\matrix (m) [matrix of math nodes,row sep=2.8em,column sep=2em,minimum width=2em]
 			{
 			    \ddots  & \ddots & 0 & & & \\
 				 S_{3N-n+1}\Omega^{n-1} & S_{3N-n}\Omega^n & S_{3N-n-1}\Omega^{n+1} & 0 & \\
 				 \dots & S_{2N-n+1}\Omega^{n-1} & S_{2N-n}\Omega^n & S_{2N-n-1}\Omega^{n+1} & 0  \\
 				 & \dots & S_{N-n+1}\Omega^{n-1} & S_{N-n}\Omega^n & S_{N-n-1}\Omega^{n+1}  \\};
 			\path[-stealth]
 			(m-2-1) edge (m-1-1)
 			(m-2-2) edge (m-1-2)
 			(m-2-3) edge (m-1-3)
 			(m-3-2) edge (m-2-2)
 			(m-3-3) edge (m-2-3)
 			(m-3-4) edge (m-2-4)
 			(m-4-3) edge (m-3-3)
 			(m-4-4) edge (m-3-4)
 			(m-4-5) edge (m-3-5)
 			
 			(m-2-1.east|-m-2-2) edge [mygray] (m-2-2)
 			(m-2-2) edge [mygray] (m-2-3)
 			(m-2-3) edge [mygray] (m-2-4)
 			(m-3-1.east|-m-3-2) edge [mygray] (m-3-2)
 			(m-3-2) edge [mygray] (m-3-3)
 			(m-3-3) edge [mygray] (m-3-4)
 			(m-3-4) edge [mygray] (m-3-5)
 			(m-4-2.east|-m-4-3) edge [mygray] (m-4-3)
 			(m-4-3) edge [mygray] (m-4-4)
 			(m-4-4) edge [mygray] (m-4-5);
 		\end{tikzpicture}
 	\end{center}
 	\caption{$E_0$ page of the Spectral Sequence.}
 	\label{fig:E0page}
 \end{figure}
 The terms on page $E_1(B)$ are the appropriate homogeneous components of the 
 Koszul cohomology, i.e. cohomology of the complex  $K^\bullet_f$
 \begin{equation*}
 	0\xrightarrow{df\wedge}\Omega^0\xrightarrow{df\wedge}\Omega^1\xrightarrow{df\wedge}\Omega^2\xrightarrow{df\wedge}\ldots\xrightarrow{df\wedge}\Omega^{n+1}\xrightarrow{df\wedge}0.
 \end{equation*}
 It is well known that if $f$ has isolated singularities then Koszul cohomology groups are trivial except for the top two, i.e., $H^i(K^{\bullet}_f)=0$ for $i<n$, see page 2 of \cite{SaM} and Proposition 
 6.2.21 on page 195 of \cite{Di} for the statement and \cite{SaK} for a proof.   Let
 \begin{align*}
 	H^n(K_f^{\bullet})_j & =\ker(S_j\Omega^n\xrightarrow{df\wedge}S_{j+N-1}\Omega^{n+1})/\text{im}(S_{j-N+1}\Omega^{n-1}\xrightarrow{df\wedge}S_j\Omega^n) \\
 	H^{n+1}(K_f^{\bullet})_j & =S_j\Omega^{n+1}/\text{im}(S_{j-N+1}\Omega^n\xrightarrow{df\wedge}S_j\Omega^{n+1})
 \end{align*}
 then the $E_1$ page of the spectral sequence is shown in Figure \ref{fig:E1page}.
 
 \begin{figure}[h]
 	\begin{center}
 		\begin{tikzpicture}
 			\matrix (m) [matrix of math nodes,row sep=1.5em,column sep=1.8em,minimum width=2em]
 			{
 				\ddots & \ddots & & & \\
 				& H^n(K^{\bullet}_f)_{3N-n} & H^{n+1}(K^{\bullet}_f)_{3N-n-1} & & \\
 				& & H^n(K^{\bullet}_f)_{2N-n} & H^{n+1}(K^{\bullet}_f)_{2N-n-1} & \\
 				& & & H^n(K^{\bullet}_f)_{N-n} & H^{n+1}(K^{\bullet}_f)_{N-n-1} \\};
 			\path[-stealth]
 			(m-2-2.east|-m-2-3) edge node [above] {$d$} (m-2-3)
 			(m-3-3.east|-m-3-4) edge node [above] {$d$} (m-3-4)
 			(m-4-4.east|-m-4-5) edge node [above] {$d$} (m-4-5);
 		\end{tikzpicture}
 	\end{center}
 	\caption{$E_1$ page of the Spectral Sequence.}
 	\label{fig:E1page}
 \end{figure}

 The $E_2$ page is defined by taking the kernel and cokernel of the de Rham differential on the spaces seen above.  (Technically the maps above are only induced by the de Rham differential since the elements of $H^i(K^{\bullet}_f)$ are equivalence classes of forms, but we abuse terminology).  
 
 \subsection{Results of Dimca and Saito.}

 Rather powerful results of Dimca and Saito, which we review below, claim that 
 the differential of the $E_1$ page is injecive in every position (with a single exception for even $n$) and is also surjective except perhaps
 in the first $n$ terms. In addition, the spectral sequence degenerates at the $E_2$ term. 
 \begin{theorem}
 	\label{e2-theorem} 
 	Denote by 
 	$H^n(K^{\bullet}_f)_{jN-n}\xrightarrow{d}H^{n+1}(K^{\bullet}_f)_{jN-n-1}$ the horizontal differential for the $E_1$ page of the spectral sequence (induced
 	by the de Rham differential). Then 
 	
 	(i) the spectral sequence degenerates at the $E_2$ page;
 	
 	(ii) the $E_1^{t, s} $ terms have dimension $\tau$ (i.e. the number of nodes)
 	when $s \geq (n+1)$ and 
 	$s+t = n$ or $s+t = n-1$.
 	
 	(ii) the vector spaces $E_2^{t, s} $ vanish except possibly in two cases: either when $ s \in \{1, \ldots, n \}$
 	and $t = n-s$, or when $n$ is even and the space $E_2^{n/2-1, n/2}$ has dimension $\tau$. 
 \end{theorem}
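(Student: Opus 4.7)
The plan is to reduce all three assertions to the Dimca--Saito analysis of the pole order spectral sequence over $\mathbb{C}$ (\cite{DiSaM1}, \cite{DiSaM2}, \cite{SaM}) and transfer the conclusions to our setting. Since each statement is a rank condition on $K$-linear maps built from the coefficients of $f$, it suffices to fix any embedding $K \hookrightarrow \mathbb{C}$; the equisingularity hypothesis of Section \ref{equi} guarantees that $\tau$ and the Jacobian subscheme $W$ have the same length over $k$, $K$, and $\mathbb{C}$, so no information is lost in this comparison.

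For the stable dimension claim in (ii), I would do a direct graded Koszul computation. Since $f$ has only isolated singularities, the sequence $\partial_0 f,\dots,\partial_n f$ cuts out the reduced $0$-dimensional scheme $W$ of length $\tau$, and $K_f^{\bullet}$ has cohomology concentrated in form-degrees $n$ and $n+1$. The identification
\[
H^{n+1}(K_f^{\bullet})_j \;\cong\; (S/J_f)_{j-n-1}
\]
reduces the $H^{n+1}$ side to the standard fact that the Hilbert function of the Jacobian ring stabilizes to $\tau = \mathrm{length}(W)$ once the degree exceeds the Castelnuovo--Mumford regularity of $W$; in our indexing $j = sN - n - 1$ this translates to the threshold $s \geq n+1$. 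An analogous computation of the relevant syzygies, or a duality argument between $H^n$ and $H^{n+1}$, gives the same stable dimension $\tau$ for $H^n(K_f^{\bullet})_{sN-n}$.

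For parts (i) and (iii) I would invoke Dimca--Saito's key injectivity result: the differential
\[
d\colon H^n(K_f^{\bullet})_{jN-n} \longrightarrow H^{n+1}(K_f^{\bullet})_{jN-n-1}
\]
is injective for every $j \geq 1$, with the sole exception $j = n/2$ when $n$ is even, where the kernel has dimension exactly $\tau$. The proof identifies the pole order filtration with the Hodge filtration on the mixed Hodge structure of $U_{\mathbb{C}}$ past the appropriate stage, and uses strictness of the de Rham differential with respect to this filtration. The extra kernel in the even $n$ case reflects the primitive middle-degree vanishing cohomology concentrated at the $\tau$ nodes, which is $\tau$-dimensional for ODPs by the weight structure of the local vanishing cycle at a node. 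Combined with (ii), the equality of stable dimensions forces $d$ to be an isomorphism whenever $j \geq n+1$, so the only potentially nonzero $E_2$ positions are the cokernels of $d$ at $j \in \{1,\dots,n\}$ along the diagonal $s+t=n$, together with the exceptional middle-diagonal kernel in the even $n$ case. The degeneration at $E_2$ in (i) is then automatic: the higher differentials $d_r$ for $r \geq 2$ shift the bidegree off this single surviving diagonal (and off the isolated exceptional position), so they must vanish.

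The main obstacle is the Dimca--Saito injectivity statement itself, where the ODP hypothesis is essential: a general isolated singularity would fail the Hodge-theoretic purity required at the singular points and would produce additional nonzero $E_2$ entries. Beyond this cited input, what remains is bookkeeping: matching the indexing conventions of \cite{Di} with our $(t,s)$ bigrading, verifying the descent from $\mathbb{C}$ back to $K$ (formal, since ranks of $\mathcal{O}$-linear maps are preserved under flat base change), and handling the finitely many small-$j$ cokernels on the $s+t=n$ diagonal.
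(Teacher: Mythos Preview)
Your overall plan—transfer the problem to $\mathbb{C}$ via an embedding of $K$, then invoke the Dimca--Saito analysis of the pole order spectral sequence—is exactly the paper's strategy, and your treatment of (ii) and (iii) is close to theirs (the paper cites \cite{Di1} for the $H^n$ stable dimension and uses an Euler-characteristic comparison with a smooth hypersurface for $H^{n+1}$, rather than regularity of the Jacobian ring, but either route works).

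There is, however, a real gap in your argument for (i). You claim degeneration at $E_2$ is automatic because ``the higher differentials $d_r$ for $r \geq 2$ shift the bidegree off this single surviving diagonal (and off the isolated exceptional position).'' This is correct when $n$ is odd, since then all nonzero $E_2$ terms sit on the single diagonal $s+t=n$ and every $d_r$ ($r\ge 2$) has source or target on a vanishing diagonal. But for even $n\ge 4$ it fails: the differential $d_2$ out of the exceptional term $E_2^{\,n/2-1,\,n/2}$ (on the diagonal $s+t=n-1$) lands at a bidegree on the diagonal $s+t=n$ with second index in $\{1,\dots,n\}$, which is precisely one of the positions that (iii) does \emph{not} assert to vanish. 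Nothing in the injectivity statement for $d_1$, nor in the identification of the pole order filtration with the Hodge filtration ``past the appropriate stage,'' forces this $d_2$ to be zero; strictness for the Hodge filtration would give $E_1$-degeneration only where the two filtrations coincide, not global $E_2$-degeneration here.

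The paper does not attempt to extract (i) from (iii). It invokes degeneration at $E_2$ as a separate input, citing Theorem~2 of \cite{SaM}, and then computes the $E_2$ terms. You should do the same, or else supply an independent argument (for instance, computing $\dim H^{n-1}_{dR}(U_{\mathbb{C}})=\tau$ directly from the geometry of the blowup and then using that the unique $E_2$ term on the $s+t=n-1$ diagonal already has dimension $\tau$, so nothing further can die).
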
 
\begin{proof}
	The spectral sequence degenerates at the $E_2$ page by Theorem 2 in \cite{SaM}. 
	We have seen above that $E_1^{t, s} $ can only be nonzero for $s+t = n-1$ or $n$.

	First consider the case $s+t = n-1$. Then 
	$\dim E_1^{t, s}  \leq \tau$ for all $s$ and the equality holds 
  for $s \geq n$ by Theorem 1 in \cite{Di1}. 
  On the other hand, this vector space vanishes
  for $s < \frac{n}{2}$ by Theorem 9 in \cite{DiSt} and the remark right after it. 
  In addition $\dim E_2^{t, s}  = 0$ unless $s = \frac{n}{2}$ (which can happen only when $n$ is
  even) and in the latter case the dimension is $\tau$. Both facts are proved in Theorem 5.3 
  of \cite{DiSaM2}. We note here that our $s$ equals $\frac{p}{d}$ in the notation of \textit{loc.cit.} 
  and that in the case of ordinary double points each expression $\alpha_{h_k, k}$ 
  of \textit{loc. cit.} is $\frac{n}{2}$ hence the count in the theorem quoted simply reduces
  to the number of ordinary double points.
  
  Now we turn to the case of $s+t = n$. Our goal is to show that for $s \geq (n+1)$ the 
  dimension of $\dim E_1^{t, s} $ is also equal to the number of singular points while 
  $E_2^{t, s} $ also vanishes in that range. Indeed, for $s \geq (n+1)$ and 
  any \textit{smooth} hypersurface of the same degree as $f$, the group 
  similar to $E_1^{t, s} $ is zero by Section 6.1  in \cite{Di}. Since the Euler characteristic
  of the complex $E_0^{s, *}$ with finite dimensional components and Koszul differential 
  $df$ is independent on the choice of $f$, we conclude that $\dim E_1^{s, n -s} 
  = \dim E_1^{s, n - s - 1} = \tau$ for $s \geq {n+1}$. 
  
  The $E_1$ page differential $d: E_1^{n - s, s-1} \to E_1^{n-s, s}$ is a linear map 
  of vector spaces of the same dimension, and its kernel (i.e. $E_2^{n-s, s-1}$)
  vanishes since $(s-1) \geq n  > n/2$. So the cokernel vanishes as well. 
  Finally, for $s =0$ and $t = n-1, n$ the groups $E_1^{0, t}$ vanish since a
  polynomial $l$-form with $l > 0$ has degree $\geq l$. 
\end{proof}
 
 \begin{corollary} Let 
 	$$
 	b(n, N) = \frac{1}{N} ( (N-1)^{n+1} + (-1)^{n+1} (N-1))
 	$$
 	be the dimension of $H^n_{dR}(U_K)$ for a \textrm{smooth} degree $N$ 
  hypersurface in $\mathbb{P}^n_K$, cf. Theorem 3.1 in \cite{Ge}, and $\mu$ 
  be the number of ordinary double points for $f$. Then 
  
  (i) If $n$ is odd then $H^n_{dR}(U_K)$ has dimension $b(n, N) - \tau$ and other
  reduced de Rham cohomology groups are zero 
  
  (ii) If $n$ is even then $H^n_{dR}(U, K)$ has dimension $b(n, N)$, 
  $H^{n-1}_{dR} (U_K)$ has dimension $\tau$, and other reduced de Rham cohomology 
  groups are zero. 
 	
 \end{corollary}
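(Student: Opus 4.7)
The plan is to combine the $E_2$-degeneration from Theorem \ref{e2-theorem}(i) with the vanishing pattern in part (iii): under degeneration $H^k_{dR}(U_K) = \bigoplus_{s+t=k} E_2^{t,s}$, and the only surviving positions are on the anti-diagonal $s+t=n$ with $s \in \{1,\ldots,n\}$, together with the single position $(t,s)=(n/2-1,n/2)$ of dimension $\tau$, which occurs only when $n$ is even. This immediately yields the vanishing claims of (i) and (ii) and gives $\dim H^{n-1}_{dR}(U_K) = \tau$ for $n$ even and $0$ for $n$ odd, leaving only the computation of $D_n := \dim H^n_{dR}(U_K) = \sum_{s=1}^n \dim E_2^{n-s,s}$.

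For each $s \in \{1,\ldots,n\}$, rank-nullity applied to $d_1 \colon E_1^{n-s-1,s} \to E_1^{n-s,s}$ gives $\dim E_2^{n-s,s} - \dim E_2^{n-s-1,s} = \dim E_1^{n-s,s} - \dim E_1^{n-s-1,s}$; summing and using (iii) to recognize $\sum_s \dim E_2^{n-s-1,s} = D_{n-1}$ produces $D_n - D_{n-1} = \sum_{s=1}^n (\dim E_1^{n-s,s} - \dim E_1^{n-s-1,s})$. To evaluate the right-hand side I would use a column Euler-characteristic identity: each column $B^{t,*}$ of the bicomplex is a finite-length complex of finite-dimensional vector spaces under the Koszul differential $df\wedge$, so its alternating dimension sum is independent of $f$. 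Comparison with any smooth $f_{sm}$ of degree $N$, whose Koszul cohomology is concentrated in top form-degree, gives $\dim E_1^{t,n-t} - \dim E_1^{t,n-t-1} = \dim J^{sm}_{(n-t)N - (n+1)}$, the corresponding graded piece of the smooth Jacobian ring.

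Writing $b(t) := \dim E_1^{t,n-t}$ and $a(t) := \dim E_1^{t,n-t-1}$ and substituting $b(t) = b_{sm}(t) + a(t)$ into the sum, the $a$-terms telescope to give
\[
D_n - D_{n-1} \;=\; \sum_{t=0}^{n-1} \dim J^{sm}_{(n-t)N - (n+1)} \;+\; a(n-1) - a(-1).
\]
The smooth Jacobian ring is Gorenstein with socle in degree $(N-2)(n+1)$, so $b_{sm}(t)$ is supported in $t \in \{0,\ldots,n-1\}$, and the sum exhausts every nonzero graded piece, evaluating to $b(n,N)$. The boundary term $a(n-1) = \dim E_1^{n-1,0}$ vanishes since $B^{n-1,0} = \Omega^n_0 = 0$, while $a(-1) = \dim E_1^{-1,n}$ lies on the diagonal $s+t = n-1$ at $s = n$. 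The proof of Theorem \ref{e2-theorem}(ii) invokes Theorem 1 of \cite{Di1} to show that $\dim E_1^{t,s} = \tau$ on this diagonal already for $s \geq n$ (slightly sharper than the $s \geq n+1$ appearing in the statement of (ii)), which gives $a(-1) = \tau$. Therefore $D_n - D_{n-1} = b(n,N) - \tau$, yielding $D_n = b(n,N) - \tau$ for $n$ odd and $D_n = b(n,N)$ for $n$ even, as claimed.

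The main obstacle is the boundary identification $\dim E_1^{-1,n} = \tau$: it requires the stabilization of the $H^n$-Koszul cohomology on the diagonal $s+t = n-1$ already at $s = n$, one step sharper than the bound in the statement of Theorem \ref{e2-theorem}(ii) and must be extracted from Theorem 1 of \cite{Di1}. A secondary care is to verify that each column of the bicomplex is genuinely finite-length (which follows because the form degree $s+t+1$ is bounded by $n+1$), so that the column Euler-characteristic identity is well-posed despite the bicomplex being infinite in the $t$-direction.
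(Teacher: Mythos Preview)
Your proof is correct and follows essentially the same route as the paper's: compute $\dim H^n_{dR}(U_K) - \dim H^{n-1}_{dR}(U_K)$ by applying rank--nullity to each $d_1$, then pass from the $E_1$ page to the smooth comparison via the column Euler characteristic of the Koszul differential, and pick up the boundary correction $\dim E_1^{-1,n}=\tau$ from the $s\geq n$ stabilization established (via \cite{Di1}) inside the proof of Theorem~\ref{e2-theorem}. Your telescoping is in fact more transparent than the paper's rather compressed explanation of where the $-\tau$ enters.
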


\begin{proof}
	The previous theorem (degeneration of pole order spectral sequence plus computation in 
	part (ii)) establish the assertion for $H^{n-1}$.
	
	For $H^n$, let's compute $\dim H^{n}_{dR} (U_K) - \dim H^{n-1}_{dR}(U_K)$.
	By degeneration, this is equal to 
	$$
	\sum_{s = 0, \ldots, n} (\dim E_2^{n - s, s} - \dim E_2^{n-s-1, s})
	= \sum_{s = 0, \ldots, n} (\dim E_1^{n - s, s} - \dim E_1^{n-s-1, s} ) - \tau 
	= b(n, N) - \tau
	$$
	and the assertion follows. The first equality above holds since $E_2^{n - s - 1, s}$ 
	vanishes for $s = n+1$ and $E_1^{n - s - 1, s}$ has dimension $\tau$ 
	by Theorem 1 in \cite{DiSt}, as quoted before. So the difference of 
	dimensions for $E_2^{n - s, s}$ and $E_1^{n-s, s}$ for $s = n$ is precisely $\tau$. 
	As for all other values of dimensions, we can related $E_1$
	page to $E_2$ by using the Euler characteristic property for the $E_1$ page 
	differential. Finally, the second equality follows similarly by the Euler characteristic
	of the $E_0$ page differential: since it is independent on the choice of differential 
	we can replace $f$ by a homogeenous polynomial defining a smooth hypersurface 
	where the standard techniques given $b(n, N)$. 
\end{proof}

\bigskip
\noindent 
\textbf{Remark.} Observe that although all results of Dimca and Saito 
were obtained over the field $\mathbb{C}$, they immediately transfer to our choice
of $K$. Indeed, there exists a (discontinuous) embedding $K \to \mathbb{C}$, 
all our complexes have filtrations with finite dimensional quotients
hence all statements regarding vanishing and dimension over $K$ are equivalent to 
the complex valued version by exactness of $\otimes_K \mathbb{C}$. 
 
 \section{Frobenius action}
 
 \subsection{Dagger algebra and Monsky-Washnitzer model of rigid cohomology}
 
 Next we compute the Frobenius action induced by the embedding, see Section 2 of 
 \cite{Ge},  of $A\hookrightarrow A^{\dagger}$ where $A$ is the ring of functions on $U=\mathbb{P}^3\setminus Z(f)$ and $A^{\dagger}$ is a certain completion used in Monsky-Washnitzer cohomology theory.  We will not repeat the definition here, sending the
 reader to \textit{loc. cit.}, but infinite sums used below are well defined in
 $A^\dagger$. The important reason for introducing $A^\dagger$ is that the corresponding completed
 de Rham complex $(\Omega^\bullet(A^{\dagger}), d)$ computes the rigid cohomology 
$H^\bullet_{rig}(U_k)$. Although in our special case the same vector spaces 
are also isomorphic to the cohomology of uncompleted de Rham complex 
$(\Omega^\bullet(A), d)$, the operator $F^*$ used in the computation of zeta 
function is naturally defined on the completed de Rham complex, and we need to use
the quasi-isomorphism of Theorem \ref{thm:isom}  to transport it to the usual de Rham complex
involved in the results of Dimca and Saito.

 Let us first consider the case when $n$ is odd (so $X$ has even dimension $(n-1)$). 
 By Theorem \ref{e2-theorem} the $E_2$ page of the pole spectral sequence has at 
 most $n$ nonzero terms $E_2^{n-s, s}$ for $s \in \{1, \ldots n\}$. 
   In order to find the zeta function of $f$ we must compute the Frobenius action of each basis element in these $n$ spaces. 
   Such basis element,  coming from  $
   h dx_0 \wedge \ldots \wedge dx_nH^{n+1}(K_f^{\bullet})_{sN-n-1}$ for $s\in\{1,
  \ldots, n\}$ can be written as
 \begin{equation*}
 	\frac{\Delta(hdx_0\wedge\dots\wedge dx_n)}{f^s}=\frac{h\Omega}{f^s}
 \end{equation*}
 where $\Omega = \Delta(dx_0\wedge\dots\wedge dx_n)$. 
 After applying the Frobenius action to the basis elements of each rigid cohomology module 
 we then reduce in cohomology and form the (square) matrix of Frobenius.  The coefficients of the characteristic polynomial of this matrix are integers and hence there $p$-adic representations are finite thereby allowing us to truncate the results and recover the zeta function.
 
 For even $n$ there is a single non-zero term on the diagonal $E_2^{t, s}$ with 
 $t+s = (n-1)$, for $s = n/2$ and the cohomology classes can be represented by closed 
 forms which are linear combinations of $\Delta(dx_0 \wedge \ldots \wedge
 \widehat{dx_i} \wedge \ldots \wedge dx_n)$ with coefficients $\frac{h_i}{f^{s^i}}$
 so the same procedure applies.

 \subsection{Formulas for the lift of Frobenius}
 
 Here we assume that $k = \mathbb{F}_p$ to simplify the notation
  (the case of field with $p^a$ elements follows by 
 a simple adjustment as in \cite{Ge})
 Following Section  4 of \cite{Ge}, 
 denote by $\sigma$ the Frobenius automorphism both in $Gal(\mathbb{F}_q/\mathbb{F}_p)$
 and in $Gal(K/\mathbb{Q}_p)$. 
  The Frobenius action is defined on coordinate 
  functions by  $\hat{F}_p(x_i)=x_i^p$.  To compute where $1/f$ is mapped to, use the equation
 \begin{equation} \label{eq:moveAdag}
 	1=\hat{F}_P(f/f)=f^{\sigma}(x^p)\hat{F}_p(1/f)
 \end{equation}
 where $f^{\sigma}$ is obtained by applying $\sigma$ to each coefficein of $f$.  
 Equation (\ref{eq:moveAdag}) has no solution in $A$ but in $A^{\dagger}$
 one can use 
 \begin{equation*}
 	f^{-p}(1-pg/f^p)^{-1},\text{ where }pg=f^p-f^{\sigma}(x^p).
 \end{equation*}
For a homogeneous polynomial $h$ of degree $sN - (n+1)$ this gives the formula
(4.1) in \cite{Ge}: 
 \begin{equation} \label{eq:FrobCh1}
 	\hat{F}_p: \frac{h\Omega}{f^l}\mapsto p^n\frac{h(x^p)(x_0x_1\ldots x_n)^{p-1}\Omega}{f^{pl}}\Bigg(\sum_{k=0}^{\infty}p^k\frac{\alpha_kg^k}{f^{pk}}\Bigg)
 \end{equation}
 where $\alpha_k$ is the $k^{\text{th}}$ coefficient of the power series $(1-t)^{-l}=1+\alpha_1t+\alpha_2t^2+\dots$.  This action extends to $A^\dagger$ by continuity.

  Note that the Frobenius action will increase the pole order 
   but we can reduce it in cohomology because the de Rham complex of the algebra $A$ has finite dimensional cohomology 
  and hence the same holds for the Monsky-Washnitzer complex of $A^{\dagger}$. 
  We are planning to truncate the infinite series to a finite sum, since the 
  eigenvalues of the Frobenius matrix have known absolute values which allows 
  to recover the contribution for large enough index $k$, without computing 
  the terms explicitly. 
   In other
  words, adjusting by a series of exact forms we can reduce the output of $\hat{F}_p$ to a
   finite expression. 
   
   However, as any  algorithm can only compute finitely many terms, we will need to truncate
   the above series.  We can do this because $\zeta(f,T)$ is a rational function and the reciprocal of the characteristic polynomial of the Frobenius matrix has integer coefficients, cf. 
   \cite{LS}.  Any positive integer will have a terminating $p$-adic expansion while any negative integer's $p$-adic expansion will trail off with $p-1$ since
 \begin{equation*}
 	-1=p-1+(p-1)p+(p-1)p^2+\dots.
 \end{equation*}
 Theorem 3.2 in \cite{Ge}  gives the following bound allowing to recover the zeta function modulo $p^D$ with
 \begin{equation} \label{eq:boundCoeff}
 	D\geq\lceil\log_p(2\gamma+1)\rceil\text{ where }\gamma:=\binom{b}{\lfloor b/2\rfloor}q^{b/2}
 \end{equation}
 provided that the sign of the determinant of the Frobenius matrix is known.  Here $b$ is the number of basis elements for the de Rham cohomology of $U$.  For our case, when $n$ is odd,
 $f$ has $\tau$ isolated ordinary double points, $b= b(n, N) - \tau$.  The above bound is based on the fact that the eigenvalues for the Frobenius matrix, in the smooth case, have absolute value $p^{(n-1)/2}$,   and the coefficients of the characteristic polynomial of the Frobenius matrix, which is a reciprocal polynomial, can be written in terms of symmetric polynomials in the eigenvalues/roots.
 If $e_i$ stands for the $i$-th elementary symmetric function,   
 then any monic polynomial $g$ of degree $k$ with roots $r_1,r_2,\dots,r_k$ can be written as
 \begin{equation*}
 	g=x^k-e_1(r_1\dots,r_k)x^{k-1}+e_2(r_1,\dots,r_k)x^{k-2}+\dots+(-1)^ke_k(r_1\dots,r_k).
 \end{equation*}
 Now when $n$ is odd and  $f$ has isolated ordinary double points, the complement $U_k=\mathbb{P}^3_k\setminus X_k$ along with the $E_2$ page having zero 
 $s+t = n-1$, satisfy the hypotheses of Theorem 5.2 on page 174 of \cite{ChSt}.
 Therefore the eigenvalues of the Frobenius matrix have absolute value $p^{(n-1)/2}$.  Hence the bound (\ref{eq:boundCoeff}) from \cite{Ge} holds.  The last bound needed is for truncating the series of the Frobenius action, (\ref{eq:FrobCh1}).  Corollary 4.2 on page 22 of \textit{loc. cit.} tells us that we can truncate the series $\sum_{k}p^k\alpha_kg^k/f^{pk}$, removing the terms with 
 $k \geq M$, as long as
 \begin{equation*}
 	k\geq D+ (n+1)\lfloor\log_p(p(k+n)-1)\rfloor-n+1+r_1\;\;\;\;\;\;\forall\;k\geq M
 \end{equation*}
 where $r_1=0$ for $p>2$, see the bottom of page 21 of \textit{loc. cit.}. Recall that $p > n-1$ for us.

 \subsection{Reduction in top cohomology.} 
 \label{reduction}
 The Frobenius action formula (\ref{eq:FrobCh1}) is an infinite series while a computer can only handle finitely many  terms.  For our algorithm we truncate this series and reduce each term until it is written in the cohomology classes of the basis elements for the $E_2$ page.  We now explain the reduction process.  Given $g\Omega/f^s$ for some $g\Omega\in S_{sN-n}\Omega^n$ we can write
 \begin{equation*}
 	g\Omega=\Delta(gdx_0\wedge dx_1\wedge \ldots \wedge dx_n)
 \end{equation*}
 where $gdx_0\wedge dx_1\wedge \ldots\wedge dx_n\in S_{sN-n-1}\Omega^{n+1}$.  If $s>n$ then the de Rham differential induces an isomorphism on the $E_1$ page by Theorem 
 \ref{e2-theorem} and $gdx_0\wedge\ldots\wedge dx_n$ can be expressed as
 \begin{equation} \label{eq:gao}
 	gdx_0\wedge dx_1\wedge \ldots\wedge dx_n=d\alpha+df\wedge\omega
 \end{equation}
 where
 \begin{gather}
 	 \label{eq:aInKer}
 	\alpha\in\ker(S_{sN-n}\Omega^n\xrightarrow{df\wedge}S_{(s+1)N-n-1}\Omega^{n+1});
 	\qquad 
 	\omega\in S_{(s-1)N-3}\Omega^3.
 \end{gather}
 Now consider
 \begin{align}
 	& \frac{g\Omega}{f^s}-d\Bigg(\frac{N}{|\omega|}\frac{\Delta(\omega)}{f^{s-1}}-\frac{\Delta(\alpha)}{f^s}\Bigg) \nonumber  =\frac{g\Omega}{f^s}+\frac{N}{|\omega|}\frac{\Delta(d_f(\omega))}{f^s}-\frac{\Delta(d_f(\alpha))}{f^{s+1}}\text{, by (\ref{eq:domegaFull})} \nonumber \\
 	& =\frac{g\Omega}{f^s}+\frac{N}{|\omega|}\frac{\Delta(d\omega)}{f^{s-1}}-\frac{\Delta(gdx_0\wedge\dots\wedge dx_n-d\alpha)}{f^{s}}-\frac{\Delta(d\alpha)}{f^s}\text{, by (\ref{eq:gao}) and (\ref{eq:aInKer})} \nonumber \\
 	& =\frac{g\Omega}{f^s}+\frac{N}{|\omega|}\frac{\Delta(d\omega)}{f^{s-1}}-\frac{g\Omega}{f^{s}}+\frac{\Delta(d\alpha)}{f^s}-\frac{\Delta(d\alpha)}{f^s} \nonumber =\frac{N}{|\omega|}\frac{\Delta(d\omega)}{f^{s-1}} \nonumber \\
 	& \Rightarrow \frac{g\Omega}{f^s}\equiv\frac{N}{|\omega|}\frac{\Delta(d\omega)}{f^{s-1}}\text{ (mod }d_{\text{dR}}). \label{eq:reducFormula}
 \end{align}
 
 Hence when reducing in cohomology we first check if the $g$ from (\ref{eq:gao}) is in the image of Koszul differential, $df\wedge$.  This can be determined by using a Groebner basis for the Jacobian ideal of $f$ (and in higher degrees by applying a lemma that follows).  Now if $g$ is in the image of $df\wedge$ then $\alpha=0$ and we can decrease the pole order of $g\Omega/f^s$ by using the reduction formula, (\ref{eq:reducFormula}).  If $g$ is not in the image of $df\wedge$ and $s>n$ then we must find the $\alpha$ from (\ref{eq:gao}) in order to reduce in cohomology.  Technically this problem can be solved with Linear Algebra by using matrices, that is one can construct matrices for the kernel and image of $df\wedge$, however this is not practical if the polynomial part of $\alpha$ is large. For example suppose that $n=3$, $\deg(f)=\deg(h)=4$ and $p=7$, then the third term ($k=2$) in the Frobenius action formula has polynomial degree equal to $4p+4(p-1)+4pk=108$.  There are $\binom{108+3}{3}=221,815$ monomials of degree 108 in four variables and this is only the third term in the series!  More importantly, even if one did construct a matrix of this size it would only reduce the pole order of $g\Omega/f^s$ by 1.  Thankfully there is a quicker way to find $\alpha$ which uses bases of the subdiagonal on the $E_1$ page. We will apply the following 
 
 \begin{lemma}
 	\label{top-koszul}
 	Let $J=(f_0,f_1,\ldots,f_n)$ be the Jacobian ideal of $f$ with $\deg(f)=N\geq2$ and suppose that the projective hypersurface defined by $f$ has isolated ordinary double points. 
 	  If $g\in\mathbb{C}[x_0,x_1,\ldots,x_n]$ with $\deg(g)\geq (n+1)(N-1)$, then $g\in J$ if and only if $g$ vanishes at all singular points of $f$.
 \end{lemma}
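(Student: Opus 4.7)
The forward direction is immediate: if $g=\sum_i h_i f_i$ and $P$ is a singular point, then $g(P)=\sum h_i(P)f_i(P)=0$. Conversely, I need to show that a homogeneous polynomial $g$ of degree $d\geq(n+1)(N-1)$ vanishing at each singular point $P_i$ lies in $J$. Equivalently, the evaluation map $\mathrm{ev}_d:(S/J)_d\to\mathbb{C}^\tau$, sending $\bar{g}\mapsto(g(P_1),\ldots,g(P_\tau))$, must be injective in that range.

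The strategy is to compare $\mathrm{ev}_d$ with the map induced by the Koszul complex $K^\bullet=K^\bullet(f_0,\ldots,f_n)$, whose terms are $K^i=S(-i(N-1))^{\binom{n+1}{i}}$ and whose zeroth homology is $S/J$. Because the $\tau$ ODPs yield a $1$-dimensional common affine vanishing locus $V(J)=\bigcup_i\ell_i$ (the cones over the $P_i$), the depth of the sequence $(f_0,\ldots,f_n)$ on $S$ equals $n$, so $H_i(K^\bullet)=0$ for $i\geq 2$ and only $H_1(K^\bullet)$ obstructs $K^\bullet$ from being a free resolution. Its Euler characteristic as a Hilbert series is
\begin{equation*}
\chi(K^\bullet,t)=\frac{(1-t^{N-1})^{n+1}}{(1-t)^{n+1}}=(1+t+\cdots+t^{N-2})^{n+1},
\end{equation*}
a polynomial of degree $(n+1)(N-2)$, so its coefficient vanishes for $d\geq(n+1)(N-1)$ and therefore $\dim(S/J)_d=\dim H_1(K^\bullet)_d$ in that range.

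The final step is to pin down $H_1(K^\bullet)$ and relate it to $\mathrm{ev}_d$. Its support lies in $\bigcup_i\ell_i$; the non-degeneracy of the Hessian at each $P_i$ (already exploited in the proof of Lemma \ref{blowup}) shows that locally the $n+1$ partials generate the maximal ideal of the line $\ell_i$ of codimension $n$, producing exactly one non-Koszul syzygy at each ODP. A local computation—reducing to the normal form $f=y_1^2+\cdots+y_n^2$ after a change of coordinates, which is permitted since $p>n-1$—then shows that $H_1(K^\bullet)$ is a torsion-free rank-$1$ module on each $\ell_i$ and decomposes globally as $\bigoplus_{i=1}^\tau(S/\mathfrak{p}_{\ell_i})(-c_i)$ for shifts $c_i$ determined by the degrees in which those syzygies first appear. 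Because $g$ is homogeneous of degree $d$ and $\ell_i$ is the affine cone over $P_i$, the projection onto the $i$-th summand is naturally identified, up to a nonzero scalar, with $\bar{g}\mapsto g(P_i)$. Once $d\geq\max_i c_i$, the comparison map $(S/J)_d\to H_1(K^\bullet)_d\cong\mathbb{C}^\tau$ is therefore an isomorphism coinciding with $\mathrm{ev}_d$, giving the required injectivity. The main obstacle is the quantitative bound $c_i\leq(n+1)(N-1)$; this is essentially the content of Theorem 1 in \cite{Di1} invoked in the proof of Theorem \ref{e2-theorem}, and it can be verified by the explicit local quadratic model at each ODP, where the Koszul cohomology stabilizes no later than the total degree $(n+1)(N-1)$ of the generators.
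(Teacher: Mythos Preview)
Your argument has a genuine gap at the point where you invoke a ``comparison map $(S/J)_d\to H_1(K^\bullet)_d$.'' No such map exists: $S/J$ is $H_0$ of the Koszul complex and $H_1$ is a different homology group, and nothing in your setup produces a morphism between them. What your Euler-characteristic computation \emph{does} give (once you know $\dim H_1(K^\bullet)_d=\tau$) is the numerical equality $\dim(S/J)_d=\tau$, and that is genuinely useful. But it is not enough. The kernel of $\mathrm{ev}_d$ is $(I/J)_d$ with $I=\sqrt{J}$, so injectivity is equivalent to $I_d=J_d$, i.e.\ to the $\tau$ singular points imposing independent conditions on forms of degree $d$. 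For arbitrary configurations of $\tau$ points this only holds once $d\geq\tau-1$, and already for a Kummer surface one has $\tau=16>(n+1)(N-1)=12$, so the bound you want is not automatic and your proof does not supply it.

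The paper closes exactly this gap by bringing in a second ingredient you do not use: the embedding $H^n(K_f^\bullet)_m\hookrightarrow (S/I)_m$ of Dimca--Sticlaru (Theorem~1.2 in \cite{DiSt}), valid after a transversal coordinate choice. Combined with $\dim H^n(K_f^\bullet)_m=\tau$ (your $\dim H_1=\tau$, which is Theorem~\ref{e2-theorem}/\cite{Di1}), this yields $\tau\leq\dim(S/I)_m\leq\dim(S/J)_m=\tau$, forcing $I_m=J_m$. Note also that your appeal to the ``explicit local quadratic model'' cannot by itself establish the global degree bound $c_i\leq(n+1)(N-1)$: a local normal form controls the module $H_1$ near each line $\ell_i$ but says nothing about the graded shift, which is a global invariant. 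That bound is precisely the Dimca--Saito input you cite, so at best your argument is circular at that step. To repair the proof, keep your Euler-characteristic step to obtain $\dim(S/J)_d=\tau$, drop the $H_1$ ``comparison map,'' and instead invoke the Dimca--Sticlaru embedding to get the matching lower bound on $\dim(S/I)_d$.
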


 \begin{proof}
 	Let $I=\sqrt{J}$ be the radical of the Jacobian ideal, $S=\mathbb{C}[x_0,x_1,\ldots,x_n]$, 
 	and assume that $Z(f)$ has transversal intersection with $Z(x_0)$.
 	Note that we can always find a change of coordinates such that $Z(x_0)$ is transversal to 
 	$X=Z(f)$,  for instance see Example 3.3 in \cite{DiSt}. 
 	
 	Next, consider the map $\Omega^n_{m+n} \to S_m/I_m$ defined by sending an $n$ form
 	$\alpha$ to its coefficient of $dx_1 \ldots dx_n$ reduced modulo $I$. By transversatlity 
 	assumption and Theorem 1.2 in \cite{DiSt}, this induces an embedding of 
 	$H^n(K_f^{\bullet})_m$ into $S_m/I_m$. 
   If $m\geq (n+1)(N-1)$ then by Theorem \ref{e2-theorem} 
 	\begin{align*}
 		\tau & =\dim(H^n(K_f^{\bullet})_m)  \leq\text{codim}(I_m), \qquad\text{ (from the embedding)} \\
 		& \leq\text{codim}(J_m), \hspace{3.4cm} (\text{since }J\subseteq I )\\
 		& =\tau, \hspace{4.8cm} (\text{since } \dim(H^{n+1}(K_f^{\bullet})_m=\tau).
 	\end{align*}
 	The above shows that, in this range, $I_m=J_m$.  Moreover, 
 	for a nodal hypersurface $g\in I$ if and only if $g$ vanishes at all singular points of $f$: this is a reformulation of Theorem 1.5 in  \cite{DiSt2} (its proof in \textit{loc. cit.} is based on Cayley-Bacharach Theorem).  This proves the lemma. 
 \end{proof}
It follows that in (\ref{eq:gao}), $g$ has the same values at the singular points (or rather their 
fixed lifts $P_1, \ldots, P_\tau$ to vectors in the $n+1$-dimensional affine space) as the
polynomial $d\alpha/(dx_0 \wedge \ldots \wedge dx_n)$, where $\alpha \in H^n(K_f^{\bullet})_{sN-n}$
By Theorem \ref{e2-theorem}, for $s > n$ we have 
$\dim(H^n(K_f^{\bullet})_{(n+1)N-n})=\tau$. Let $\{\gamma_1,\dots,\gamma_{\tau}\}$ be a basis for this space.  Further, suppose that the coordinate hyperplane $x_0=0$ is transversal to $X$ then
\begin{equation*}
	\{x_0^k\gamma_1,\dots,x_0^k\gamma_{\tau}\}
\end{equation*}
is a basis for $H^n(K_f^{\bullet})_{nN-n+k}$ for any $k\geq0$ by Corollary 11 of \cite{ChDi}.   At this point it is clear that $\alpha$ will be a linear combination of $x_0^k\gamma_1\dots,x_0^k\gamma_{\tau}$ for $k=(s-n-1)N$ 
and the coefficients of that linear combination are computed by looking 
at $d(x_i^k \gamma_j)$ and computing their values at $P_1, \ldots, P_\tau$.

 More precisely, let us look at
  $H^3(K_f^{\bullet})_{(n+1)N-n+k}$ with $k \geq 0$.  
  We know that de Rham differential induces and isomorphism and therefore
 \begin{equation*}
 	\{d(x_0^k\gamma_1),\dots,d(x_0^k\gamma_{\tau})\}
 \end{equation*}
 is a basis for $H^4(K_f^{\bullet})_{(n+1)(N-1)+k}$.  In particular $d(x_0^k\gamma_i)$ is not in the image of $df\wedge\Rightarrow$ the polynomial part of $d(x_0^k\gamma_i)$ is not in the Jacobian ideal and from the lemma above $d(x_0^k\gamma_i)$ does not vanish at all singular points of $f$.  We then have the following linear system of equations
 \begin{align}
 	c_1d(x_0^k\gamma_1)|_{P_1}+\dots+c_{\tau}d(x_0^k\gamma_{\tau})|_{P_1} & =g|_{P_1} \nonumber \\
 	c_1d(x_0^k\gamma_1)|_{P_2}+\dots+c_{\tau}d(x_0^k\gamma_{\tau})|_{P_2} & =g|_{P_2} \nonumber \\
 	& \;\;\vdots \nonumber \\
 	c_1d(x_0^k\gamma_1)|_{P_{\tau}}+\dots+c_{\tau}d(x_0^k\gamma_{\tau})|_{P_{\tau}} & =g|_{P_{\tau}}. \label{eq:sysRed}
 \end{align}
 This system has a unique solution, for if it did not then there would be a non trivial solution to
 \begin{equation*}
 	c_1v_1+\dots+c_{\tau}v_{\tau}=0\text{ where }v_i=\begin{pmatrix}
 		d(x_0^k\gamma_i)|_{P_1} \\
 		\vdots \\
 		d(x_0^k\gamma_i)|_{P_{\tau}}
 	\end{pmatrix}.
 \end{equation*}
 This means that the polynomial part of $c_1d(x_0^k\gamma_1)|_{P_1}+\dots+c_{\tau}d(x_0^k\gamma_1)|_{P_{\tau}}$ vanishes at all singular points of $f$ and hence it is in the Jacobian ideal of $f$ from the lemma above.  But this is a contradiction since the $d(x_0^k\gamma_i)$ are a basis for $H^{n+1}(K_f^{\bullet})_{(n+1)(N-1)+k}$ and by definition not in the image of $df\wedge$.  Therefore the system of equations (\ref{eq:sysRed}) has a unique solution.  Furthermore we can write the solution as
 \begin{equation} \label{eq:solRed}
 	\begin{pmatrix}
 		c_1 \\
 		\vdots \\
 		c_{\tau}		
 	\end{pmatrix}=\begin{pmatrix}
 		d(x_0^k\gamma_1)|_{P_1} & \dots & d(x_0^k\gamma_{\tau})|_{P_1} \\
 		\vdots & \ddots & \vdots \\
 		d(x_0^k\gamma_1)|_{P_{\tau}} & \dots & d(x_0^k\gamma_{\tau})|_{P_{\tau}}
 	\end{pmatrix}^{-1}\begin{pmatrix}
 		g|_{P_1} \\
 		\vdots \\
 		g|_{P_{\tau}}
 	\end{pmatrix}
 \end{equation}
 where the $\tau\times\tau$ matrix can be reused anytime one is reducing in cohomology in this degree.  That is the square matrices used to reduce the pole order for one basis element, $h$,  do not have to be recalculated.  This completes our method of finding an $\alpha$ such that $gdx_0\wedge\dots\wedge dx_3=d\alpha+df\wedge\omega$ and this is one of the main differences between the ordinary double point case and the smooth case.
 
 \subsection{Reduction on the subdiagonal for even $n$.} 
 
 Since $E_1^{t, s}$ has no nonzero terms with $t+s = n-2$ and $E_2^{t, s}$ has 
 a unique nonzero term for $t+s = n-1$, the situations is much simpler: in 
 (\ref{eq:gao}) we can simply take $\alpha = 0$ and then use the analogue 
 of (\ref{eq:reducFormula}) to reduce the pole order while keeping the same
 cohomology class.

 \section{Steps of the algorithm}
 
 Below is a brief description of each step of our algorithm.  
 \begin{enumerate}
 	\item[1.] On the $E_1$ page compute $(H^n(K^{\bullet}_f)_{jN-n},H^{n+1}(K^{\bullet}_f)_{jN-n-1})$ for $j=1,\ldots, n$.
 	\item[2.] Find a basis for every cohomology space in each pair from step 1.
 	\item[3.] For odd $n$, use the bases from step 2 (each pair) to calculate a basis for the top cohomology spaces on the $E_2$ page of the spectral sequence
 	\begin{equation*}
 		\frac{b_1\Omega}{f^{i_{b_1}}},\dots,\frac{b_L\Omega}{f^{i_{b_L}}},\text{ where }L=\frac{1}{N}((N-1)^{n+1}+(N-1))-\tau.
 	\end{equation*}
    For even $n$, calculate the basis for $E_2^{n-s, s}$ with $s = 1, \ldots, n$ and 
    for $E_2^{n/2-1, n/2}$. 
 	\item[4.] Calculate Frobenius action for every basis element from step 3 using the formula (\ref{eq:FrobCh1}) and its analogue for the subdiagonal in which 
 	$p^{n+1} (x_0x_1\ldots x_n)^{p-1}\Omega$ is replaced by Frobenius images of 
 	$\Delta(dx_0 \wedge \ldots \wedge
 	\widehat{dx_i} \wedge \ldots \wedge dx_n)$ and the sum of 
 	(\ref{eq:FrobCh1}) is applied to each of the coefficients $\frac{h_i}{f^{s^i}}$.
 	\item[5.] Reduce in cohomology as in Section \ref{reduction}.
 	\item[6.] Use Step 5 to compute  the Frobenius matrix and calculate its characteristic polynomial.
 \end{enumerate}

 Any algorithm must imply a finite computation. Results on spectral sequences
 given in Section 4  tell us that the $E_2$ page vanishes when the  total degree is
  greater than $nN$.  Thus we can restrict our attention to the terms of the $E_1$ page
  mentioned in Step 1. 
 Finding their dimension and then a basis, if de Rham is not surjective, can be done with a computer by using linear algebra since the differentials $d$ and $df\wedge$ are linear operators.  This also limits the number of spaces on the $E_2$ page for step 3.  Again we are only interested in $E_2^{t, s}$ for $s=1,\ldots,n$ and $t = n-s$, and for even $n$ also $s = n/2$, $t = n/2-1$.  Step 4 is a formula which one can easily implement with a computer and bounds for when to truncate were given in Section 5.2.  In the next section we explain step 5.

 At the end of step 5 we have written the (truncated) Frobenius action of each basis element as a linear combination of all basis elements ($E_2$ page).  If $f\in\mathbb{Z}[x_0,x_1,x_2,x_3]$ then this linear combination is over $\mathbb{Q}$.  For practical purposes we can construct the Frobenius matrix such that its entries are these constants in $\mathbb{Q}$ or their $p$-adic expansion.  In any case we must calculate the $p$-adic expansion of the coefficients of the reciprocal characteristic polynomial of the Frobenius matrix.  Explicit examples are given in 
 the next section and for more on $p$-adic numbers see \cite{Ko}.

 \section{Examples of computations for surfaces ($n=3$). }
 
 We apply our algorithm to the Cayley Cubic, a Kummer Surface, a quartic with 6 ordinary double points and a quintic with 14 ordinary double points.  For the Cayley Cubic we go in to greater detail.

\subsection{Cayley Cubic}
The defining equation for the Cayley Cubic is 
\begin{equation}
	f(x_0, x_1, x_2, x_3) = x_0x_1x_2+x_0x_1x_3+x_0x_2x_3+x_1x_2x_3=0. \label{eq:CayleyDef}
\end{equation}
This surface has four ordinary double points, the maximum for any surface of degree three, located at $[1:0:0:0]$, $[0:1:0:0]$, $[0:0:1:0]$, and $[0:0:0:1]$.  It is important to note that using our algorithm for the Cayley Cubic is overkill.      We have the following table.

\begin{table}[h]
	\centering
	\renewcommand{\arraystretch}{1.2}
	\begin{tabular}{|c|c|}
		\hline
		point(s) & number of roots of $f$ \\
		\hline
		\hline
		$[0:0:0:1]$ & $1$ \\
		\hline
		$[0:0:1:x_3]$ & $q$ \\
		\hline
		$[0:1:x_2:x_3]$ &  $2q-1$ \\
		\hline
		$[1:x_1:x_2:x_3]$ & $q^2+1$ \\
		\hline
	\end{tabular}
	\caption{Rational Points of the Cayley Cubic}
	\label{CayleyRoots}
\end{table}

For the last row of Table \ref{CayleyRoots} we go into more detail.  On $[1:x_1:x_2:x_3]$ the surface becomes
\begin{equation}
	x_1(x_2+x_3+x_2x_3)+x_2x_3=0 \label{eq:A3}
\end{equation}
\textbf{Case 1}: $x_2+x_3+x_2x_3=0$ then (\ref{eq:A3}) reduces to $x_2x_3=0$.  
Subtracting  gives $x_2+x_3=0$ and $x_2x_3=0\Rightarrow x_2=x_3=0$ giving us $q$ roots since $f(1,x_1,0,0)=0$.

\noindent
\textbf{Case 2}: $x_2+x_3+x_2x_3\neq0$ then we can explicitly solve for $x_1$ in (\ref{eq:A3}) yielding
\begin{equation*}
	x_1=-\frac{x_2x_3}{x_2+x_3+x_2x_3}.
\end{equation*}
  How many points $(x_2,x_3)$ are there such that $x_2+x_3+x_2x_3\neq0$? This reduces
   to the question: how  many points yield $x_2+x_3+x_2x_3=0$? For $x_3 \neq -1$ 
   we can rewrite $x_2=-\frac{x_3}{1+x_3}$. This gives  $q-1$ points $(x_2,x_3)$ such that $x_2+x_3+x_2x_3=0$.  The value $x_3 = -1$ is not compatible with $x_2 + x_3 + x_2 x_3 = 0$. 
   
   This means that i case 2 there are $q^2-(q-1)$ roots of $f$.  Adding the number of roots in cases 1 and 2 gives $q+q^2-(q-1)=q^2+1$.  Since $q$ was a arbitrary power of some prime $p$, we have $\#V(\mathbb{F}_{q^r})=1+3q^r+q^{2r}$ (this is the sum of the second column of Table \ref{CayleyRoots} with $q$ replaced by $q^r$). Thus
\begin{equation*}
	\zeta(f,T)=\exp\Bigg(\sum_{r=1}^{\infty}(q^{2r}+3q^r+1)\frac{T^r}{r}\Bigg)=\frac{1}{(1-T)(1-qT)^3(1-q^2T)}.
\end{equation*}
To compare this with the output of our algorithm, first note that $f$ is equisingular since  the coordinates of the singular points consist of only ones or zeros which are in any  field.  Now we look at the $E_0$ page of the spectral sequence. 
We use Figure \ref{fig:E0page} with $N=n=3$. Starting with the top diagonal we have
\begin{align}
	\dim(H^4(K_f^{\bullet})_2) 
	& =10-\dim(\text{im}(S_0\Omega^3\xrightarrow{df\wedge}S_2\Omega^4)) \label{eq:dimIm}
\end{align}
By straightforward linear algebra we get that   
\begin{equation} \label{eq:E1top}
	\{x_0^2,x_0x_1,x_0x_2,x_1^2,x_2^2,x_3^2\}dx_0\wedge dx_1\wedge dx_2\wedge dx_3
\end{equation}
project to a basis for $H^4(K^{\bullet}_f)_2$.  For the subdiagonal space $H^3(K^{\bullet}_f)_3$ we have
\begin{align*}
	\dim(H^3(K^{\bullet}_f)_3) 
	& =\dim(\ker(S_3\Omega^3\xrightarrow{df\wedge}S_5\Omega^4))-\dim(\text{im}(S_1\Omega^2\xrightarrow{df\wedge}S_3\Omega^3)).
\end{align*}
Using Mathematica we find that $\dim(H^3(K^{\bullet}_f)_3)=4$ and in general
\begin{table}[h]
	\centering
	\renewcommand{\arraystretch}{1.2}
	\begin{tabular}{|c|c|c|c|c|c|c|c|c|}
		\hline
		$j$ & 0 & 1 & 2 & 3 & 4 & 5 & 6 & $\dots$ \\
		\hline
		$\dim(H^4(K^{\bullet}_f)_j)$ & 1 & 4 & 6 & 4 & 4 & 4 & 4 & $\dots$ \\
		\hline
		$\dim(H^3(K^{\bullet}_f)_j)$ & 0 & 0 & 3 & 4 & 4 & 4 & 4 & $\dots$ \\
		\hline
	\end{tabular}
	\caption{Koszul Cohomology Dimensions of the Cayley Cubic}
	\label{KoszulDim}
\end{table}
The first non trivial kernel of $df\wedge:S_j\Omega^3\rightarrow S_{j+2}\Omega^4$ occurs when $j=2$ and it can be used to give a basis for higher cohomology groups by multiplying the basis elements by certain powers of $x_i$.  For example Mathematica tells us that the following 3-forms are a basis for $H^3(K^{\bullet}_f)_2$
\begin{align*}
	n_1 & =(2x_0x_2+3x_2x_3-2x_3^2)dx_0\wedge dx_1\wedge dx_2-(2x_0x_3-2x_2^2+3x_2x_3)dx_0\wedge dx_1\wedge dx_3 \\
	& \;\;\;\;-(6x_0x_1+2x_0x_2+2x_0x_3-x_2x_3)dx_0\wedge dx_2\wedge dx_3-(4x_0^2-x_2x_3)dx_1\wedge dx_2\wedge dx_3 \\
	n_2 & =(2x_0x_1+3x_1x_3-2x_3^2)dx_0\wedge dx_1\wedge dx_2 \\
	& \;\;\;\;+(2x_0x_1+3x_0x_2-x_0x_3-x_1x_3-3x_2x_3)dx_0\wedge dx_1\wedge dx_3 \\
	& \;\;\;\;-(3x_0x_1+x_0x_3+2x_1^2)dx_0\wedge dx_2\wedge dx_3-(4x_0^2-x_1x_3)dx_1\wedge dx_2\wedge dx_3 \\
	n_3 & =(x_0x_1+x_0x_2+x_1x_2+3x_1x_3+3x_2x_3)dx_0\wedge dx_1\wedge dx_2 \\
	& \;\;\;\;+(x_0x_1+3x_0x_2+2x_2^2)dx_0\wedge dx_1\wedge dx_3-(3x_0x_1+x_0x_2+2x_1^2)dx_0\wedge dx_2\wedge dx_3 \\
	& \;\;\;\;-(4x_0^2-x_1x_2)dx_1\wedge dx_2\wedge dx_3.
\end{align*}
For example, one can check that $\{x_0n_1,x_2n_1,x_3n_1,x_1n_2\}$ is a basis for $H^3(K^{\bullet}_f)_3$.  We are taking advantage of the fact that $df\wedge$ is $\mathbb{C}[x_0,x_1,x_2,x_3]$-linear for if $df\wedge n_1=0\Rightarrow df\wedge(x_0n_1)=x_0(df\wedge n_1)=0$ and therefore we only need to check if this element is in the image of $df\wedge$, where Lemma \ref{top-koszul} applies.  

Steps 1 and 2 of the algorithm have been completed.  For Step 3 we compute a basis for the only cohomology space (for this example) on the $E_2$ page
\begin{align*}
	E_2^{4,2} 
	& =H^4(K^{\bullet}_f)_2/\text{im}(H^3(K^{\bullet}_f)_3\xrightarrow{d}H^4(K^{\bullet}_f)_2).
\end{align*}
In order to find a basis for this space we must calculate the de Rham differential of each element in the basis $\{x_0n_1,x_2n_1,x_3n_1,x_1n_2\}$.  Take the element $x_0n_1$
\begin{equation*}
	d(x_0n_1)=(-6x_0^2+x_0x_2+x_0x_3+x_2x_3)dx_0\wedge dx_1\wedge dx_2\wedge dx_3.
\end{equation*}
Recall that (\ref{eq:E1top}) is our basis for $H^4(K^{\bullet}_f)_2$ and not all of the monomials of $d(x_0n_1)$ are in this basis, only $x_0^2$.  To solve this problem we can add an element in the $\text{im}(P_0\Omega^3\xrightarrow{df\wedge}P_2\Omega^4)$ to $x_0n_1$. For example

\begin{align*}
	d\Big( (-x_0n_1+df\wedge(x_3dx_0\wedge dx_2))/6\Big) & =x_0^2dx_0\wedge dx_1\wedge dx_2\wedge dx_3.\\
	d\Big((x_2n_1+2df\wedge(x_3dx_0\wedge dx_2))/3\Big) & =x_2^2dx_0\wedge dx_1\wedge dx_2\wedge dx_3 \\
	d\Big((x_3n_1+2df\wedge(x_3dx_0\wedge dx_2))/3\Big) & =x_3^2dx_0\wedge dx_1\wedge dx_2\wedge dx_3 \\
	d\Big((x_1n_2+df\wedge(x_3dx_0\wedge dx_1))/3\Big) & =x_1^2dx_0\wedge dx_1\wedge dx_2\wedge dx_3
\end{align*}
It's obvious that $E_2^{4,2}$ is  spanned by (images of) $\{x_0x_1,x_0x_2\}dx_0\wedge\dots\wedge dx_3$.  
Now we calculate the action of Frobenius on
	$\frac{x_0x_1\Omega}{f^2},\frac{x_0x_2\Omega}{f^2}$.
For $h=x_0x_1$ the equation (\ref{eq:FrobCh1}) becomes
\begin{equation} \label{eq:CayleyFrob}
	p^3\frac{(x_0x_1)^p(x_0x_1x_2x_3)^{p-1}\Omega}{f^{2p}}\Bigg(\sum_{k=0}^{\infty}\frac{(k+1)(f^p-f(x^p))^k}{f^{pk}}\Bigg).
\end{equation}
As an example, let $p=5$, then the first term ($k=0$) in the series above is $125x_0^9x_1^9x_2^4x_3^4\Omega/f^{10}$.  Our final goal is to write this form as a linear combination of $x_0x_1\Omega/f^2$ and $x_0x_2\Omega/f^2$, but first we must reduce it to $\beta/f^9$ where $\beta\in P_{24}\Omega^3$.  To do this we write $125x_0^9x_1^9x_2^4x_3^4dx_0\wedge\dots\wedge dx_3$ in the form of (\ref{eq:gao}) which can be accomplished by using a Groebner basis for the Jacobian of $f$.  Indeed we find that
$125x_0^9x_1^9x_2^4x_3^4dx_0\wedge\dots\wedge dx_3=df\wedge\omega$
where
\begin{align}
	\omega & =125x_0^9x_1^9x_2^3x_3^2\Bigg[\frac{1}{6}x_3dx_0\wedge dx_1 \wedge dx_2-\Bigg(\frac{2}{3}x_2+\frac{1}{2}x_3\Bigg)dx_0\wedge dx_1\wedge dx_3 \nonumber \\
	& \;\;\;\; -\Bigg(\frac{1}{3}x_1+\frac{1}{2}x_3\Bigg)dx_0\wedge dx_2\wedge dx_3+\Bigg(\frac{1}{3}x_0+\frac{1}{2}x_3\Bigg)x_0dx_1\wedge dx_2\wedge dx_3\Bigg]. \label{eq:omegaCayley}
\end{align}
Two remarks are important here.  One, the 3-form $\omega$ is not unique, and two, we did not have to use the de Rham differential since the monomial $x_0^9x_1^9x_2^4x_3^4$ is in the Jacobian ideal of $f$, that is, the $\alpha$ of (\ref{eq:gao}) is zero in this case.  Back to the reduction we have
\begin{equation*}
	\frac{125x_0^9x_1^9x_2^4x_3^4\Omega}{f^{10}}\equiv\frac{1}{9}\frac{\Delta(d\omega)}{f^9}(\text{mod }d_{\text{dR}})
\end{equation*}
the $\omega$ from above, (\ref{eq:omegaCayley}).  For the next reduction we must find an $\omega_1\in S_{21}\Omega^3$ such that $df\wedge\omega_1=d\omega$. 
Again we check if $d\omega\in\text{im}(S_{21}\Omega^3\xrightarrow{df\wedge}S_{23}\Omega^4)$ and it is.  When the 4-form is not in the image of the Koszul differential, for the case of the  Cayley Cubic, the remainder can be discarded because it is a pure power of $x_i$ which is in the image of $d$.  By Lemma \ref{top-koszul} any mixed monomial of degree 3 is in the Jacobian ideal of $f$. Hence when we are reducing in cohomology, if there is a remainder, then $\alpha$ from (\ref{eq:gao}) can only be $x_i^{3k+2}dx_0\wedge\dots\wedge dx_3$ for $i=0,1,2,3$ and $k\in\mathbb{Z}_{>0}$.  We have already given explicit examples of 3-forms whose de Rham differential is $x_i^2dx_0\wedge\dots\wedge dx_3$ for $i=0,1,2,3$ and the general case admits
similar formulas.
The  table below shows the reductions of the first 7 terms of the series in (\ref{eq:CayleyFrob}).
\begin{table}[h]
	\begin{center}
		\renewcommand{\arraystretch}{1.6}
		\begin{tabular}{|c|c|}
			\hline			
			$k$ & $k^{\text{th}}$ part of the reduction of $x_0x_1$ \\
			\hline
			0 & $\frac{1}{126}\cdot5^2x_0x_1$ \\
			\hline
			1 & $\frac{1}{9009}\cdot5^5x_0x_1$ \\
			\hline
			2 & $\frac{1}{34034}\cdot5^6x_0x_1$ \\
			\hline
			3 & $\frac{1013}{5819814}\cdot5^5x_0x_1$ \\
			\hline
			4 & $\frac{1487}{38244492}\cdot5^6x_0x_1$ \\
			\hline
			5 & $\frac{2084}{49766871}\cdot5^6x_0x_1$ \\
			\hline
			6 & $\frac{2087}{1185579252}\cdot5^8x_0x_1$ \\
			\hline
		\end{tabular}
	\end{center}
	\label{redResults}
\end{table}

For the other basis element $x_0x_2dx_0\wedge\cdots\wedge dx_3$ the reductions are exactly the same; meaning that the table for $x_0x_2$ is the same as the one above, but with $x_2$ instead of $x_1$.  Let
\begin{equation*}
	r_0=\frac{5^2}{126}, r_1=\frac{5^5}{9009}, r_2=\frac{5^6}{34034},\dots, r_6=\frac{2087\cdot5^8}{1185579252}
\end{equation*}
and let $\phi$ be the embedding of $\mathbb{Q}$ into $\mathbb{Q}_5$.  We now look at the convergence of $\phi(r_0+\dotsc+r_i)$.
\begin{align*}
	\phi(r_0) & =1\cdot5^2+0\cdot5^3+0\cdot5^4+4\cdot5^5+4\cdot5^6+4\cdot5^7+0\cdot5^8+0\cdot5^9+\dots \\
	\phi(r_0+r_1) & =1\cdot5^2+0\cdot5^3+0\cdot5^4+3\cdot5^5+2\cdot5^6+0\cdot5^7+0\cdot5^8+0\cdot5^9+\dots \\
	\phi(r_0+r_1+r_2) & =1\cdot5^2+0\cdot5^3+0\cdot5^4+3\cdot5^5+1\cdot5^6+3\cdot5^7+4\cdot5^8+4\cdot5^9+\dots \\
	\phi(r_0+\dotsc+r_3) & =1\cdot5^2+0\cdot5^3+0\cdot5^4+0\cdot5^5+0\cdot5^6+2\cdot5^7+0\cdot5^8+0\cdot5^9+\dots \\
	\phi(r_0+\dotsc+r_4) & =1\cdot5^2+0\cdot5^3+0\cdot5^4+0\cdot5^5+1\cdot5^6+4\cdot5^7+4\cdot5^8+4\cdot5^9+\dots \\
	\phi(r_0+\dotsc+r_5) & =1\cdot5^2+0\cdot5^3+0\cdot5^4+0\cdot5^5+0\cdot5^6+0\cdot5^7+4\cdot5^8+0\cdot5^9+\dots \\
	\phi(r_0+\dotsc+r_6) & =1\cdot5^2+0\cdot5^3+0\cdot5^4+0\cdot5^5+0\cdot5^6+0\cdot5^7+0\cdot5^8+2\cdot5^9+\dots
\end{align*}
This sequence of numbers is 5-adically converging to 25 (we are using the 
estimate (\ref{eq:boundCoeff}) here).  Therefore the matrix of Frobenius is $25I_2$, and the ``interesting" part of the zeta function is
\begin{equation*}
	\det\Bigg(\begin{pmatrix}
		1 & 0 \\
		0 & 1
	\end{pmatrix}-5^{-1}T\begin{pmatrix}
		25 & 0 \\
		0 & 25
	\end{pmatrix}\Bigg)=(1-5T)^2.
\end{equation*}
This gives the zeta function of the Cayley Cubic for $p=5$ is
\begin{equation*}
	\zeta(f,T)=\frac{1}{(1-T)(1-5T)^3(1-25T)}.
\end{equation*}

\subsection{Kummer Surfaces}
A Kummer Surface is a degree 4 surface in $\mathbb{P}^3$ with 16 ordinary double points, the maximum number of nodes for any quartic.  Its family of defining polynomials are
\begin{equation*}
	(x_0^2+x_1^2+x_2^2-\mu ^2x_3^2)^2-\lambda(x_3-x_2-\sqrt{2}x_0)
	(x_3-x_2+\sqrt{2}x_0)(x_3+x_2+\sqrt{2}x_1)(x_3+x_2-\sqrt{2}x_1)
\end{equation*}
where $\lambda=(3\mu ^2-1)/(3-\mu ^2)$ and $\mu ^2 \neq \frac{1}{3}$, $\mu ^2 \neq 1$, $\mu ^2 \neq 3$.  In this section we find the zeta function of a Kummer Surface with $\mu=2$, that is
\begin{equation*}
	f=x_0^4+x_1^4+12x_2^4+27x_3^4+x_0^2(46x_1^2-20x_2^2-44x_2x_3-30x_3^2)-x_1^2(20x_2^2-44x_2 x_3+30x_3^2)-30x_2^2x_3^2.
\end{equation*}
The 16 ordinary double points of $f$ are
\begin{gather}
	\Big[\pm(\sqrt{2}+\sqrt{3}):\pm(\sqrt{2}-\sqrt{3}):-\sqrt{6}:2\Big],\Big[\pm(\sqrt{2}-\sqrt{3}):\pm(\sqrt{2}+\sqrt{3}):\sqrt{6}:2\Big], \nonumber \\
	\big[\pm\sqrt{3}:0:-1:1\big],\big[\pm3\sqrt{2}:0:-3:2\big],\big[0:\pm3\sqrt{2}:3:2\big],\big[0:\pm\sqrt{3}:1:1\big]. \label{eq:KummerSingPts}
\end{gather}
Choose a prime $p$ and see if the lift from $\mathbb{F}_p$ from $\mathbb{Z}_p$ is equisingular.  Using a Groebner basis one can show that $f$ has 16 singularities over the algebraic closures of $\mathbb{F}_5$ and $\mathbb{F}_7$, but things go wild for $\mathbb{F}_{11}$.  Take for instance the singular point $[0:\sqrt{3}:1:1]$ and define $g(x_0,x_1,x_2,x_3)=f(x_3,x_0\sqrt{3}+x_2,x_0+x_1,x_0)$.  Then $[1:0:0:0]$ is a singular point of $g$ and with the  quadratic part of $g(1,x_1,x_2,x_3)$  written as
\begin{equation*}
	-18x_1^2+8\sqrt{3}x_1x_2+12x_2^2+44x_3^2=(x_1\;\;x_2\;\;x_3)\begin{pmatrix}
		-18 & 4\sqrt{3} & 0 \\
		4\sqrt{3} & 12 & 0 \\
		0 & 0 & 44
	\end{pmatrix}\begin{pmatrix}
		x_1 \\
		x_2 \\
		x_3
	\end{pmatrix}
\end{equation*}
where the determinant of the $3\times3$ matrix above is $-11616=-2^5\cdot3\cdot11^2$.  This was just one of the 16 ordinary double points of $f$, but one can check that for all 16 points the corresponding determinant of the matrix above will be divisible by $11^2$ and therefore we exclude the case $p=11$ when using our algorithm.  However we can still calculate the zeta function of $f$ over $\mathbb{F}_{11}$ since
\begin{gather*}
	f\equiv 
	(x_0^2+x_1^2+x_2^2+7x_3^2)^2\in\mathbb{F}_{11}[x_0,x_1,x_2,x_3].
\end{gather*}
Thus, the zeta function of $f$ equals that of the smooth quadratic $x_0^2+x_1^2+x_2^2+7x_3^2$ which is
\begin{equation*}
	\zeta(f,T)=\frac{1}{(1-T)(1-11T)(1-121T)(1+11T)}.
\end{equation*}
Now apply our algorithm to $f$, $p=5, 7$.   First, $\dim(E_2^{4,0})=1$ and $dx_0\wedge\dots\wedge dx_3$ gives a basis for this space.  Moving up one level, to total degree $2N=8$, we use the following basis for $H^4(K_f^{\bullet})_4$
\begin{gather}
	\{x_0^4,x_0^3x_1,x_0^3x_2,x_0^3x_3,x_0^2x_1^2,x_0^2x_1x_2,x_0^2x_1x_3,x_0^2x_2^2,x_0x_1^3,x_0x_1^2x_2,x_0x_1^2x_3, \nonumber \\
	x_0x_1x_2^2,x_1^4,x_1^3x_2,x_1^3x_3,x_2^4,x_2^3x_3,x_2x_3^3,x_3^4\}dx_0\wedge dx_1\wedge dx_2\wedge dx_3. \label{eq:KummerTop}
\end{gather}
For the subdiagonal, $H^3(K_f^{\bullet})_5$, the authors were able to find a basis $\{b_1,\dots,b_{15}\}$ such that
\begin{align}
	d(b_1) & =x_0^3x_1  & d(b_5) & =x_0^2x_1x_3 & d(b_9) & =x_1^3x_2 & d(b_{13}) & =x_0^4-x_1^4 \nonumber \\
	d(b_2) & =x_0^3x_2 & d(b_6) & =x_0x_1^3 & d(b_{10}) & =x_1^3x_3 & d(b_{14}) & =3x_0^2x_1^2+x_1^4-2x_2^4 \nonumber \\
	d(b_3) & =x_0^3x_3 & d(b_7) & =x_0x_1^2x_2 & d(b_{11}) & =x_2^3x_3 & d(b_{15}) & =4x_2^4-9x_3^4 \nonumber \\
	d(b_4) & =x_0^2x_1x_2 & d(b_8) & =x_0x_1^2x_3 & d(b_{12}) & =x_2x_3^3 \label{eq:KummerSub}
\end{align}
( all polynomials above are multiplied by the factor $dx_0\wedge dx_1\wedge dx_2\wedge dx_3$ which we omit to save room).  Therefore
\begin{equation*}
	\{x_0^4,x_0^2x_1^2,x_0^2x_2^2,x_0x_1x_2^2\}dx_0\wedge dx_1\wedge dx_2\wedge dx_3
\end{equation*}
project to a basis for $E_2^{4,4}$.  The degree of the interesting part of the zeta function, $P(T)$, is
\newline $1/4((4-1)^{3+1}+4-1)-16=5$ and we have our 5 basis elements the calculate to Frobenius action on
\begin{equation*}
	\frac{x_0^4\Omega}{f^2},\frac{x_0^2x_1^2\Omega}{f^2},\frac{x_0^2x_2^2\Omega}{f^2},\frac{x_0x_1x_2^2\Omega}{f^2},\frac{\Omega}{f}.
\end{equation*}
For reducing in cohomology  we use the following basis for $H^3(K_f^{\bullet})_{jN-3}$
\begin{gather*}
	\{x_0^{4j-6}n_1,x_0^{4j-7}x_1n_1,x_0^{4j-7}x_2n_1,x_0^{4j-7}x_3n_1,x_0^{4j-8}x_1^2n_1,x_0^{4j-8}x_1x_2n_1,x_0^{4j-8}x_1x_3n_1,x_0^{4j-8}x_2^2n_1, \\
	x_0^{4j-8}x_2x_3n_1,x_0^{4j-9}x_1^3n_1,x_0^{4j-9}x_1^2x_2n_1,x_0^{4j-9}x_1^2x_3n_1,x_1^{4j-6}n_2,x_1^{4j-7}x_2n_2,x_1^{4j-7}x_3n_2,x_1^{4j-8}x_2^2n_2\}
\end{gather*}
where
\begin{align*}
	n_1 & =-x_0(2x_0^2+6x_1^2-20x_2^2-24x_2x_3-5x_3^2)dx_0\wedge dx_1\wedge dx_2 \\
	& \;\;\;\;-5x_0x_3(5x_2+6x_3)dx_0\wedge dx_1\wedge dx_3+5x_0x_1(4x_2+5x_3)dx_0\wedge dx_2\wedge dx_3 \\
	& \;\;\;\;+(x_0^2(22x_2+30x_3)-x_1^2(2x_2-5x_3)-9x_3^3)dx_1\wedge dx_2\wedge dx_3 \\
	n_2 & =(x_0^2(4x_2+5x_3)+x_1^2(4x_2-5x_3))dx_0\wedge dx_1\wedge dx_2 \\
	& \;\;\;\;+(x_0^2(5x_2+6x_3)-x_1^2(5x_2-6x_3))dx_0\wedge dx_1\wedge dx_3 \\
	& \;\;\;\;+x_1(6x_2^2-9x_3^2)dx_0\wedge dx_2\wedge dx_3+x_0(6x_2^2-9x_3^2)dx_1\wedge dx_2\wedge dx_3
\end{align*}
and $j=3,4,\dots,42$.  We know that this is a basis for these values of $j$ because we used a computer to check that the determinant of matrix in (\ref{eq:solRed}) is non zero.  The value $j=42$ comes from reducing the first five terms of the Frobenius action series with $p=7$, i.e., for $k=4,4p+4(p-1)+pkN=164=42N-4$.  For $p=5$ our algorithm gives $P(T)=(1-5T)(1+5T)^4$ which is easy to guess if one does a point count using Magma.  However things get more interesting for $p=7$ so we explain this case in greater detail.  As mentioned above, for $p=7$ we can recover $\zeta(f,T)$ with the reduction of the first five terms (technically 3 is enough) of the Frobenius action, however the fractions involved are too long to write below.  So we give the Frobenius matrix whose $p$-adic entries have been truncated at $7^8$
\begin{equation*}
	\begin{pmatrix}
		2932436 & 3752975 & 2573683 & 0 & 3187818 \\
		3326797 & 160280 & 4878860 & 0 & 5469046 \\
		273412 & 5678768 & 1729819 & 0 & 1682962 \\
		0 & 0 & 0 & 7+7^7 & 0 \\
		4996579 & 3315242 & 144893 & 0 & 5177634
	\end{pmatrix}.
\end{equation*}
The reciprocal characteristic polynomial of the matrix above is
\begin{gather*}
	\det(I_5-MT)=1-10823719T-36173410616147T^2+190881663422782977071T^3 \\
	-307702002432034842717713096T^4+148750558587753605666444041808300T^5.
\end{gather*}
We are interested in the $p$-adic expansion of the coefficients of this polynomial which are
\begin{align*}
	1 & =1 \\
	-10823719 & =3\cdot7^0+5\cdot7^1+6\cdot7^2+6\cdot7^3+6\cdot7^4+6\cdot7^5+5\cdot7^6+0\cdot7^7+\dots \\
	-36173410616147 & =5\cdot7^1+6\cdot7^2+6\cdot7^3+6\cdot7^4+6\cdot7^5+6\cdot7^6+0\cdot7^7+1\cdot7^8+\dots \\
	\text{coeff }T^3 & =2\cdot7^2+0\cdot7^3+0\cdot7^4+0\cdot7^5+0\cdot7^6+0\cdot7^7+1\cdot7^8+5\cdot7^9\dots \\
	\text{coeff }T^4 & =4\cdot7^3+1\cdot7^4+0\cdot7^5+0\cdot7^6+0\cdot7^7+0\cdot7^8+6\cdot7^9+3\cdot7^{10}+\dots \\
	\text{coeff }T^5 & =6\cdot7^5+6\cdot7^6+6\cdot7^7+6\cdot7^8+6\cdot7^9+6\cdot7^{10}+2\cdot7^{11}+3\cdot7^{12}+\dots \\
\end{align*}
Truncating at the seventh digit gives
\begin{align*}
	P(T) & =1+(3+5\cdot7-7^2)T+(5\cdot7-7^2)T^2+2\cdot7^2T^3+(4\cdot7^3+7^4)T^4-7^5T^5 \\
	& =1-11T-14T^2+98T^3+3773T^4-16807T^5 \\
	& =(1-7T)^3(1-(-5+2i\sqrt{6})T)(1-(-5-2i\sqrt{6})T)
\end{align*}
and therefore the zeta function of $f$ for $p=7$ is
\begin{equation*}
	\zeta(f,T)=\frac{1}{(1-T)(1-7T)(1-49T)(1-7T)^3(1+10T+49T^2)}.
\end{equation*}
This took our algorithm, which is not fully automated yet, about 3 hours.  And similar to the Cayley Cubic, a brute force point count is enough to find the zeta function of a Kummer Surface so we end this section with two examples where a direct search is not practical.

\subsection{A quartic with 6 ordinary double points}
By computer experiments the authors found the following quartic surface
\begin{equation*}
	3x_0x_1x_2(x_0+x_1)+3x_2^4-((2x_0+x_1)^2-6x_1x_2)x_3^2=0
\end{equation*}
which has 6 ordinary double points
\begin{equation*}
	[1:0:0:0],[0:1:0:0],[0:0:0:1],[1:-1:0:0],
	\Bigg[-\frac{1}{2}:1:0:-\frac{\sqrt{2}}{4}\Bigg],\Bigg[-\frac{1}{2}:1:0:\frac{\sqrt{2}}{4}\Bigg].
\end{equation*}
For this surface we can find its zeta function for $p=5,7,11,13,19$ because only the first three terms in the Frobenius action needed to be reduced in cohomology to recover $\zeta(f,T)$.  In particular, for $p=19$
\begin{align*}
	19P(T/19) & =19+30T-18T^2-77T^3-48T^4+48T^5+58T^6-12T^7 \\
	& \;\;\;\;-12T^8+58T^9+48T^{10}-48T^{11}-77T^{12}-18T^{13}+30T^{14}+19T^{15}.
\end{align*}
which is a reciprocal polynomial so for direct computation one needs to know $\#V(\mathbb{F}_{19^r})$ for $r=1,2,\dots,7$.  It takes Magma 1.172 seconds to calculate $\#V(\mathbb{F}_{19^2})=132,267$ while calculations over $\mathbb{F}_{19^2}$ would
take billions of years.  
Obviously, a better way to compute $\zeta(f,T)$ is needed.  Our algorithm is one such way and when we applied it to this surface we used the following basis for $E_2^{4,4}$
\begin{gather*}
	\{x_0^4,x_0^3x_1,x_0^3x_2,x_0^3x_3,x_0^2x_1^2,x_0^2x_1x_3,x_0^2x_2^2,x_0^2x_2x_3,x_0^2x_3^2,x_0x_1^3, \\
	x_0x_1^2x_3,x_0x_1x_2^2,x_0x_2^2x_3,x_0x_3^3\}dx_0\wedge dx_1\wedge dx_2\wedge dx_3.
\end{gather*}
The table below shows the interesting part of the zeta function for all primes between 5 and 19.  For $p=19$ it took the computer 2 days to calculate $Q_3(T)$.

\begin{table}[h!]
	\centering
	\renewcommand{\arraystretch}{1.2}
	\begin{tabular}{|c|c|}
		\hline
		$p$ & $Q_3(T)$ \\
		\hline
		\hline
		5 & $(1-5T)(1+5T)^2(1-4T+10T^2-100T^3+625T^4)(1-5^4T^4)^2$ \\
		\hline
		7 & $(1+7T)^2(1+7^3T^3)(1-8T+77T^2-392T^3+7^4T^4)(1-7^6T^6)$ \\
		\hline
		11 &  $(1-11T)^3(1-4T+66T^2-484T^3+11^4T^4)(1-11^4T^4)^2$ \\
		\hline
		13 & $(1-13T)^2(1-13^3T^3)(1+6T+143T^2+1014T^3+13^4T^4)(1-13^6T^6)$ \\
		\hline
		17 & $(1-17T)^2(1+17T)(1-36T+714 T^2-10404T^3+17^4T^4)(1-17^4T^4)^2$ \\
		\hline
		19 & $(1+19T)^2(1-19^3T^3)(1-8T-399T^2-2888T^3+19^4T^4)(1-19^6T^6)$ \\
		\hline
	\end{tabular}
	\caption{$Q_3(T)$ for the quartic with 6 nodes.}
	\label{zeta6Nodes}
\end{table}

\subsection{A quintic with 14 ordinary double points}

A quintic is of  interest because it provides us with the smallest case where $p>3$ divides the degree of $f$. Our algorithm still applies.  Let $$
f=3x_0^2x_1^2(x_0+x_1)-x_2x_3(2x_0^3+2x_1^3-x_2^3-x_3^3)\in\mathbb{Z}[x_0,x_1,x_2,x_3].
$$
Then $f$ has 14 ordinary double points listed below
\begin{align*}
	& [1:0:0:0],[1:0:0:\zeta^k\sqrt[3]{2}],[1:0:\zeta^k\sqrt[3]{2}:0], \\
	& [0:1:0:0],[0:1:0:\zeta^k\sqrt[3]{2}],[0:1:\zeta^k\sqrt[3]{2}:0]
\end{align*}
where $\zeta=e^{2\pi i/3}$ for $k=0,1,2$.  Here is he interesting part of the zeta function  for primes $p=5,7,11.$

\begin{center}
	\renewcommand{\arraystretch}{1.3}
	\begin{tabular}{ |c|c| }
		\hline
		$p$ &  $Q_3(T)$ \\
		\hline
		5 & $(1-5T)^9(1+5T)^5(1+25T^2)^2(1-6T+25T^2)$ \\
		& $(1+15T^2-100T^3+375T^4+5^6T^6)(1+15T^2+100T^3+375T^4+5^6T^6)$ \\
		& $(1+6T+45T^2+200T^3+1125T^4+3750T^5+5^6T^6)$ \\
		\hline
	   7 & $(1-7T)^{10}(1+7T)^6(1-10T+49T^2)(1-7T+49T^2)^4$ \\
		& $(1+4T+42T^2+196T^3+7^4T^4)(1+9T+91T^2+441T^3+7^4T^4)^2$ \\
		\hline
		11 & $(1-11T)^{10}(1+11T)^4(1+121T^2)^2(1-3T+121T^2)$ \\
		& $(1-T-44T^2+726T^3-5324 T^4-11^4T^5+11^6T^6)$ \\
		& $(1+T-44T^2-726T^3-5324T^4+11^4T^5+11^6T^6)$ \\
		& $(1+11T+187T^2+2178T^3+22627T^4+11^5T^5+11^6 T^6)$ \\
		\hline
	\end{tabular}
\end{center}
The dimensions of the $E_1$ and $E_2$ spectral sequence terms are as follows:
\begin{align*}
	\dim(H^3(K_f^{\bullet})_2) & =0 & \dim(H^4(K_f^{\bullet})_1) & =4 \\
	\dim(H^3(K_f^{\bullet})_7) & =10 & \dim(H^4(K_f^{\bullet})_6)= & 44 \\
	\dim(H^3(K_f^{\bullet})_{12}) & =14 & \dim(H^4(K_f^{\bullet})_{11}) & =14 \\
	\dim(E_2^{4,2}) & =4 & \dim(E_2^{4,6})=34
\end{align*}
The code 
 of the algorithm and the details of the above an some other  examples are available 
 at https://sites.google.com/view/stetson-odp-algorithm/home

\end{document}